\numberwithin{equation}{section}
\newtheorem{theorem}{Theorem}[section]%\newtheorem{theorem}[equation]{Theorem}
\newtheorem{proposition}[theorem]{Proposition}%\newtheorem{proposition}[equation]{Proposition}
\newtheorem{definition}[theorem]{Definition}%\newtheorem{definition}[equation]{Definition}
\newtheorem{lemma}[theorem]{Lemma}%\newtheorem{lemma}[equation]{Lemma}
\newtheorem{corollary}[theorem]{Corollary}%\newtheorem{corollary}[equation]{Corollary}
\theoremstyle{remark}
\newtheorem{remark}[theorem]{Remark}%\newtheorem{remark}[equation]{Remark}
\newtheorem{example}[theorem]{Example}
\renewcommand{\phi}{\varphi}
\def\XXint#1#2#3{{\setbox0=\hbox{$#1{#2#3}{\int}$}
	\vcenter{\hbox{$#2#3$}}\kern-.5\wd0}}
  \renewcommand{\a}{\alpha}
  \renewcommand{\b}{\beta}
\newcommand{\ad}{\operatorname{ad}}
\newcommand{\N}{\mathbb N}
\newcommand{\R}{\mathbb R}
\newcommand{\F}{\mathfrak f}
\newcommand{\G}{{\mathfrak g}}
\newcommand{\g}{\mathfrak{g}}
\renewcommand{\H}{\mathcal H}
\newcommand{\I}{{\mathcal I}}
\newcommand{\A}{{\rm Aff}}
\renewcommand{\ker}{\operatorname{Ker}}
\newcommand{\Lie}{\operatorname{Lie}}
\newcommand{\calI}{\mathcal{I}}
\newcommand{\calJ}{\mathcal{J}}
\newcommand{\rank}{\operatorname{rank}}
\newcommand{\Ker}{\operatorname{Ker}}
\newcommand{\Span}{\operatorname{span}}
\renewcommand{\epsilon}{\varepsilon}
\newcommand{\ol}{\overline}
\def\om{\omega}
\newcommand{\aff}{\operatorname{Aff}}
\newcommand{\affh}{\operatorname{Aff_h}}
\newcommand{\affhn}{\operatorname{Aff_h}(\free_n)}
\newcommand{\affhni}{\operatorname{Aff_h}(\free_n)_i}
\newcommand{\anh}{\operatorname{Anh}}
\newcommand{\anhk[1]}{{\operatorname{Anh}^{#1}}}
\newcommand{\calF}{\mathcal{F}}
\newcommand{\frakf}{\mathfrak{f}}
\newcommand{\frakg}{\mathfrak{g}}
\newcommand{\frakh}{\mathfrak{h}}
\newcommand{\fraki}{\mathfrak{i}}
\newcommand{\frakm}{\mathfrak{m}}
\newcommand{\free}{\mathfrak{f}}
\newcommand{\Spt}{\operatorname{N}}
\newcommand{\spt}{\operatorname{im}}
\begin{document} 

\title[Horizontally affine functions on step-2 Carnot algebras]{Horizontally affine functions on step-2 Carnot algebras}

\author{Enrico Le Donne}
\address[Le Donne]{Dipartimento di Matematica, Universit\`a di Pisa, Largo B. Pontecorvo 5, 56127 Pisa, Italy \\
\& \\
University of Jyv\"askyl\"a, Department of Mathematics and Statistics, P.O. Box (MaD), FI-40014, Finland}
\email{enrico.ledonne@unipi.it}

\author{Daniele Morbidelli}
\address[Morbidelli]{Dipartimento di Matematica, Alma Mater Studiorum Universit\`a di Bologna, Italy}
\email{daniele.morbidelli@unibo.it}

\author{S\'everine Rigot}
\address[Rigot]{Universit\'e C\^ote d'Azur, CNRS, LJAD, France}
\email{Severine.RIGOT@univ-cotedazur.fr}

\thanks{E.L.D. was partially supported by the Academy of Finland (grant288501 `\emph{Geometry of subRiemannian groups}' and by grant 322898 `\emph{Sub-Riemannian Geometry via Metric-geometry and Lie- group Theory}') and by the European Research Council (ERC Starting Grant 713998 GeoMeG `\emph{Geometry of Metric Groups}'). S.R. is partially supported by ANR Project SRGI (Sub-Riemannian Geometry and Interactions) ANR-15-CE40-0018.}

\subjclass{20F18, 53C17, 15A75}
%28C15  	Set functions and measures on topological spaces (regularity of measures, etc.)
%49Q15, %  Geometric measure and integration theory, integral and normal currents
%43A80. % (1973-now) Analysis on other specific Lie groups
%%53C17, %   Sub-Riemannian geometry
%%53C60,   % Finsler spaces and generalizations 
%%28A75,  %  Length, area, volume, other geometric measure theory
%%26A16  % Lipschitz (Hlder) classes
%%58C35   Integration on manifolds; measures on manifolds
%%26B20 Integral formulas (Stokes, Gauss, Green, etc.)
%%54Exx, % Spaces with richer structures 
%%37L40 %Invariant measures
%%58D05, %Groups of diffeomorphisms and homeomorphisms as manifolds
%%22F50, %Groups as automorphisms of other structures
%% 22DXX % Locally compact groups and their algebras
%%22E25, % Nilpotent and solvable Lie groups
%% 22F30 % Homogeneous spaces
%%14M17. %Homogeneous spaces and generalizations 
%% 53C30 % Homogeneous manifolds
%% 58D19% Group actions and symmetry properties
%% 58C25 % Differentiable maps
%%49J20  % (1991-now) Optimal control problems involving partial differential equations
%%49K21,  % (2010-now) Problems involving relations other than differential equations
%%49J15  %(1991-now) Optimal control problems involving ordinary differential equations
%}
%
\keywords{step-2 Carnot groups; step-2 Carnot algebras; horizontally affine functions.}

\begin{abstract}
In this paper we introduce the notion of horizontally affine, h-affine in short, function and give a complete description of such functions on step-2 Carnot algebras. We show that the vector space of h-affine functions on the free step-2 rank-$n$ Carnot algebra is isomorphic to the exterior algebra of $\R^n$. Using that every Carnot algebra can be written as a quotient of a free Carnot algebra, we shall deduce from the free case a description of  h-affine functions on arbitrary step-2 Carnot algebras, together with several characterizations of those step-2 Carnot algebras where  h-affine functions are affine in the usual sense of vector spaces. Our interest for  h-affine functions stems from their relationship with a class of sets called precisely monotone, recently introduced in the literature, as well as from their relationship with minimal hypersurfaces.
\end{abstract}

%\date{\today}
\maketitle
\setcounter{tocdepth}{2}
\tableofcontents

%%%%%%%%%%%%%%%%%
\section{Introduction} \label{sect:introduction}

 In this paper we introduce the notion of \emph{horizontally affine} function and give a complete description of such functions on step-2 Carnot algebras, or equivalently on step-2 Carnot groups. In the free step-2 rank-$n$ case, we shall see that the vector space of horizontally affine functions is isomorphic to the exterior algebra of $\R^n$. Using the known fact that every step-2 Carnot algebra can be written as a quotient of a free step-2 Carnot algebra, we shall next deduce from the free case a description of horizontally affine functions on arbitrary step-2 Carnot algebras, together with several characterizations of those step-2 Carnot algebras where  h-affine functions are affine in the usual sense of vector spaces.

To introduce the discussion, let us recall some definitions. We refer to Section~\ref{sect:preliminaries} for more details. Let $\frakg=\frakg_1 \oplus \frakg_2$ be a step-2 Carnot algebra, which means that $\frakg$ is a finite dimensional real\footnote{It is worth to stress that our arguments and results can be verbatim extended to finite dimensional nilpotent Lie algebras of step 2 over an arbitrary field of characteristic zero.} nilpotent Lie algebra of step 2, $\frakg_2:=[\frakg,\frakg]$ denotes the derived algebra, and $\frakg_1$ denotes a linear subspace of $\frakg$ that is in direct sum with $\frakg_2$. Such a Lie algebra is naturally endowed with the group law given by
\begin{equation*} 
x\cdot y := x+y+[x,y]
\end{equation*} 
for $x, y \in \frakg$ that makes it a step-2 Carnot group. Actually every step-2 Carnot group can be realized in this way. We shall therefore view a step-2 Carnot algebra both as a Lie algebra and as a Lie  group. We also adopt the notation $y^t:=ty$ for all $t\in\R$ and $y\in\g$. A function $f:\g\to \R$ is  said to be \emph{horizontally affine}, and for brevity we say that $f$ is {\em h-affine} and write $f\in\A_{\rm h} (\G)$, if  for all $x\in \g $ and $y\in \g_1$ the function $t \in \R \mapsto f\left(x\cdot y^t \right)$ is affine. Note that this definition is purely algebraic - it has in particular no connection with the choice of a subRiemannian metric structure on $\frakg$ -- and can be equivalently restated in geometrical terms as follows. A function $f:\g\to \R$ is h-affine if and only if its restriction to each integral curve of every left-invariant horizontal vector field is affine when seen as a function from $\R$ to $\R$, where a left-invariant vector field is said to be horizontal whenever it belongs to $\frakg_1$.

Horizontally affine functions appear naturally in relation with monotone sets, an important  class of sets  introduced by Cheeger and Kleiner~\cite{CheegerKleiner10}, see also \cite{MR2892612,NaorYoung,M1,FasslerOrponenRigot} and the discussion below. However,  h-affine functions are studied systematically for the first time here. See also \cite{antonelli2020polynomial} for a further study of a related notion in more general settings. 
 
Our purposes in the present paper are twofold. We first give a description of h-affine functions on step-2 Carnot algebras, starting with the free case from which the general case will follow. We shall next deduce from this description several characterizations of those step-2 Carnot algebras where  h-affine functions are affine. We shall keep the standard terminology saying that a function $f:\g   \rightarrow \R$ is affine, writing $f\in \A(\g)$, to mean that $f$ is affine in the usual sense considering the vector space structure on $\g$. Note indeed that by elementary properties of step-2 Carnot algebras, each affine function $f\in\aff(\g)$ is   h-affine. In other words, in every step-2 Carnot algebra $\g$, the vector space $\aff(\g)$ is a linear subspace of $\affh(\g)$, see the discussion in Section~\ref{sect:preliminaries}. This inclusion may however be strict, as we shall see.

More explicitly, h-affine functions can be described with the help of the \emph{Carnot dilations} $\delta_t:\g\to\g$ given by $\delta_t(x_1+x_2):=tx_1+t^2 x_2$ for $x_1 \in \frakg_1$, $x_2\in\frakg_2$, and $t\in \R^*:=\R\setminus\{0\}$. Given a non negative integer $i$ we define the vector space of $i$-homogeneous h-affine functions as 
\begin{equation*}
\affh (\frakg)_i := \{f\in \affh (\frakg):\, f\circ \delta_t = t^i f \text{ for all } t \in \R^* \}.
\end{equation*}
We shall prove that every $f\in \affh(\frakg)$ can be written in a unique way as a finite sum of $i$-homogeneous h-affine functions for some $i$'s in $\{0,\dots,\kappa\}$ where $\kappa:=\rank\frakg$ if $\frakg$ is a free step-2 Carnot algebra and $\kappa:=\rank\frakg -1$ if $\frakg$ is a nonfree step-2 Carnot algebra. Recall that the rank of $\frakg$ is defined as $\rank\frakg:=\dim \frakg_1$. Furthermore, denoting by $\Lambda^k\R^n$  the space of alternating $k$-multilinear forms over $\R^n$ (see Section~\ref{sect:appendix-algebra} for our conventions about exterior algebra), we shall also prove that for every $i\in \{0,\dots,\kappa\}$ the vector space $\affh (\frakg)_i$ is isomorphic to a linear subspace of $\Lambda^{\kappa-i} \R^\kappa$. See Theorems \ref{thm:h-affine-maps-free-case}, \ref{thm:h-affine-maps-nonfree-case}, and \ref{thm:h-affine-maps-vs-surjective-Carnot-morphism} for detailed statements.

Let us first consider the free case. Throughout this paper, given an integer $n\geq 2$, we shall use the model for the free step-2 rank-$n$ Carnot algebra $\free_n$ given by 
\begin{equation*}
\free_n := \Lambda^1\R^n \oplus \Lambda^2\R^n
\end{equation*}
equipped with the Lie bracket where the only non trivial relations are given by 
\begin{equation*} 
[\theta,\theta'] := \theta \wedge \theta' \quad \text{ for }\theta, \theta' \in \Lambda^1\R^n.
\end{equation*} 
The induced group law  takes the form
\begin{equation*}
(\theta + \omega) \cdot (\theta' + \omega') := \theta + \theta' + \omega + \omega' + \theta \wedge \theta'
\end{equation*}
for $\theta, \theta' \in \Lambda^1\R^n$, $\omega, \omega' \in \Lambda^2\R^n$. For notational convenience, we shall frequently identify $\free_n$ with $\Lambda^1\R^n\times\Lambda^2\R^n$ writing elements in $\free_n$ as $(\theta,\omega)$ where $\theta \in \Lambda^1\R^n$, $\omega \in \Lambda^2\R^n$.

Given integers $n\geq 2$, $i\in\{0,\dots,n\}$, and $\eta \in \Lambda^{n-i}\R^n$, we define $\varphi_\eta:\free_n \rightarrow \Lambda^n \R^n$ as 
\begin{equation} \label{e:def-phieta} 
\varphi_\eta (\theta,\omega):= \left\{
\begin{aligned}[2]
 &\omega^k \wedge \eta & & \text{ if }  i=2k \text{ is even}
 \\
 &\theta\wedge  \omega^k \wedge \eta & & \text{ if }  i=2k+1 \text{ is odd}.
 \end{aligned}\right.
\end{equation}
The description of h-affine functions on $\free_n$ can then be given in terms of the functions $\varphi_\eta$'s and reads as follows.

\begin{theorem}  \label{thm:h-affine-maps-free-case} 
For $n\geq 2$ we have 
\begin{enumerate}
\item[(i)] $\affhn = \bigoplus_{i=0}^n \affhni$.
\end{enumerate}
Furthermore, given $i\in \{0,\dots,n\}$ and $\nu \in \Lambda^n \R^n \setminus \{0\}$, we have  
\begin{enumerate}
\item[(ii)] $f\in \affhni$ if and only if there is  $\eta\in \Lambda^{n-i}\R^n$, which is unique, such that $f \nu = \varphi_\eta$.
\end{enumerate}
Therefore, for $i\in \{0,\dots,n\}$, the spaces $\affhni$ and $\Lambda^{n-i}\R^n$ are isomorphic as vector spaces, and hence, so are $\affhn$ and $\Lambda^* \R^n$. In particular $\affhn$  is a finite dimensional vector space with dimension $2^n$.
\end{theorem}

Let us briefly explain our strategy to prove Theorem~\ref{thm:h-affine-maps-free-case}. Let $\nu \in \Lambda^n \R^n \setminus \{0\}$ be fixed. It is rather easy to verify that if a function $f:\free_n \rightarrow \R$ is such that $f\nu = \varphi_\eta$ for some $\eta \in \Lambda^{n-i}\R^n$ with $i\in \{0,\dots,n\}$ then $f\in \affhni$, see Lemma~\ref{lem:simple-factors-free-case}. The injectivity of the map $\eta \in \Lambda^{n-i} \R^n \mapsto \varphi_\eta$ is also not hard to verify and follows from general facts about exterior algebra, see Corollary~\ref{cor:injectivity}. The main difficulties are thus to get the decomposition given in Theorem~\ref{thm:h-affine-maps-free-case}~(i), see Theorem~\ref{thm:heisenberg} and Proposition~\ref{prop:step1-free-case}~(i), as well as the fact that every function $f\in \affhn_i$ can be written as $f\nu = \varphi_\nu$ for some $\nu \in \Lambda^{n-i}\R^n$, see Proposition~\ref{prop:step2-free-case}. This will occupy most of Section~\ref{sect:h-affine-maps-free-groups} to which we refer for more details. For the sake of completeness, let us mention the geometric interpretation behind the decomposition in Theorem~\ref{thm:h-affine-maps-free-case} when passing from $\free_n$ for $n\geq 3$ to any Lie subalgebra  of $\free_n$ that is isomorphic to $\frakf_{n-1}$. It can be proved that the zero level set of non-zero $n$-homogeneous h-affine functions on $\free_n$, namely, the set $\{(\theta,\omega)\in \free_n: \omega^{n/2}=0\}$ if $n$ is even, $\{(\theta,\omega)\in \free_n: \theta \wedge \omega^{(n-1)/2}=0\}$ if $n$ is odd, coincides with the union of all Lie subalgebras of $\free_n$ that are isomorphic to $\free_{n-1}$. Therefore, if $f=f_0 + \cdots + f_n \in \affhn$ with $f_i \in \affhni$, one gets that its restriction to any Lie subalgebra isomorphic to $\frakf_{n-1}$ coincides with the restriction to this subalgebra of the sum of the $i$-homogeneous terms $f_0+ \cdots + f_{n-1}$ for $i\in \{0,\dots,n-1\}$ that show up in the decomposition of $f$.

Let us now turn to the general case of arbitrary step-2 Carnot algebras. Our starting point is the known fact that every step-2 Carnot algebra $\g$ can be written as a quotient of  free step-2 Carnot algebras. Namely, by the universal property of free step-2 Carnot algebras, for every $n\geq \rank\frakg$, there is a surjective Carnot morphism $\pi:\free_n\to \g$, see the discussion in Section~\ref{sect:preliminaries}. It turns out that there is a one-to-one correspondence between h-affine functions on $\frakg$ and h-affine functions on $\free_n$ that factor through $\free_n / \Ker \pi$, see Lemma~\ref{lem:h-affine-maps-vs-Carnot-morphisms} and Corollary~\ref{cor:h-affine-maps-i-vs-Carnot-morphisms}. The description of h-affine functions on $\frakg$ can therefore be deduced from the characterization of those functions $\varphi_\eta$ that factor through $\free_n / \Ker \pi$. Namely, we shall verify that for $\eta\in \Lambda^i\R^n$ the function $\varphi_\eta$ factors through $\free_n / \Ker \pi$ if and only if $\eta$ annihilates $\ker\pi$, which means that $\eta \in \anhk[i] \ker \pi$ where
\begin{equation*}
\anhk[i] \ker \pi := \{\eta\in \Lambda^{i} \R^n:\, \eta \wedge \zeta = 0 \text{ for all } \zeta \in \ker\pi\}
\end{equation*}
see Lemma~\ref{lem:factor-through-quotient}. In the genuinely nonfree setting, such a characterization implies the following decomposition of $\affh(\frakg)$.

\begin{theorem} \label{thm:h-affine-maps-nonfree-case}
Let $\frakg$ be a step-2 rank-$r$ Carnot algebra. Assume that $\frakg$ is not isomorphic to $\free_r$. Then  $\affh (\frakg) = \bigoplus_{i=0}^{r-1} \affh (\frakg)_i$.
\end{theorem}

 Note that, in contrast with the free case, one has $\affh (\frakg)_r = \{0\}$ when $\frakg$ is a step-2 rank-$r$ Carnot algebra that is not isomorphic to $\free_r$. This follows from the fact that $\ker\pi \not=\{0\}$, and hence $\anhk[0] \ker \pi = \{0\}$, whenever $\pi : \free_r \rightarrow \frakg$ is a surjective Carnot morphism. A description of the summands $\affh(\frakg)_i$ is provided by the following theorem that applies both in the free and nonfree case (note that when $\frakg = \free_r$ and $n=r$ one recovers Theorem~\ref{thm:h-affine-maps-free-case}) and where the space of annihilators of $\ker\pi$ in $\Lambda^*\R^n$ is defined by
\begin{equation*}
\anh \ker\pi  := \{\eta\in \Lambda^* \R^n:\, \eta \wedge \zeta = 0 \text{ for all } \zeta \in \ker \pi\}~.
\end{equation*}

\begin{theorem} \label{thm:h-affine-maps-vs-surjective-Carnot-morphism}
Let $\frakg$ be a step-2 rank-$r$ Carnot algebra, $n\geq r$, and $\pi:\free_n \rightarrow \frakg$ be a surjective Carnot morphism. Then the following hold true. For $i\in\{0,\cdots,n\}$, $\nu \in \Lambda^n \R^n \setminus \{0\}$,
\begin{enumerate}
\item[(i)] for every $\eta\in \anhk[n-i] \ker \pi$ there is a unique $f\in  \affh (\frakg)_i$ such that $(f \circ \pi) \nu = \varphi_\eta$;
\smallskip
\item[(ii)] for every $f\in  \affh (\frakg)_i$ there is a unique $\eta\in \anhk[n-i] \ker \pi$ such that $(f \circ \pi) \nu = \varphi_\eta$;
\smallskip
\item[(iii)]  via this correspondence, $\affh (\frakg)_i$ and $\anhk[n-i] \ker \pi$ are isomorphic as vector spaces.
\end{enumerate}
 Consequently,
\begin{enumerate}
\item[(iv)] $\affh (\frakg)$ and $\anh \ker \pi$ are isomorphic as vector spaces.
\end{enumerate}
In particular $\affh (\frakg)$ is a finite dimensional vector space.
\end{theorem}

As a consequence of Theorem~\ref{thm:h-affine-maps-vs-surjective-Carnot-morphism} one gets that h-affine functions on step-2 Carnot algebras are polynomials and hence smooth. Let us stress here that there is no regularity assumption in our definition of h-affine functions. Such functions are indeed only assumed to be affine when restricted to horizontal lines, and were not even assumed continuous, nor measurable, beforehand. As a further consequence of their smoothness (one actually only needs local integrability), one can characterize elements in $\affh(\frakg)$ as those locally integrable functions that are harmonic in the distributional sense with respect to every subLaplacian on $\frakg$, see Remark \ref{rmk:subla}. Let us mention that horizontally affine distributions have been recently studied in \cite{antonelli2020polynomial} in wider settings where they can be proved to be polynomials. Note however that this later notion may be different from a pointwise generalization of our present notion of h-affine functions to more general settings, as explained at the end of Remark \ref{rmk:subla}.

Several characterizations of step-2 Carnot algebras~$\g$ where $\affh(\g)=\aff(\g)$ can easily be deduced from Theorem~\ref{thm:h-affine-maps-vs-surjective-Carnot-morphism}, see Theorem~\ref{thm:h-affine-maps-are-affine} below and Section~\ref{sect:h-affine-maps-are-affine}. It turns out that one of these characterizations can be formulated using a class of Lie algebras known in literature as  $\calI$-null, see~\cite{MR3086803}. We recall that a step-2 Carnot algebra $\g=\g_1\oplus\g_2$ is $\calI$-null if every bilinear form $b:\g_1\times\g_2\to\R$ satisfying $b(x, [x,y])=0$ for all $x,y\in\g_1$ vanishes identically on $\g_1\times\g_2$, see Definition~\ref{def:I-null} and Proposition~\ref{prop:I-null}.

\begin{theorem} \label{thm:h-affine-maps-are-affine}
Let $\frakg$ be a step-2 Carnot algebra. Then the following are equivalent:
\begin{enumerate} 
\item[(i)] $\affh (\frakg) = \aff (\frakg)$
\smallskip
\item[(ii)] $\affh (\frakg) = \bigoplus_{i=0}^{2} \affh (\frakg)_i$
\smallskip
\item[(iii)] $\bigoplus_{i\geq 3}\affh (\frakg)_i = \{0\}$, equivalently, $\affh (\frakg)_3 = \{0\}$
\smallskip
\item[(iv)] $\frakg$ is $\calI$-null
\smallskip
\item[(v)] $\bigoplus_{i= 3}^n\anhk[n-i] \ker \pi = \{0\}$, equivalently, $\anhk[n-3] \ker \pi = \{0\}$, for some, equivalently all, $n\geq \max\{3,\rank \frakg\}$, $\pi : \free_n \rightarrow \frakg$ surjective Carnot morphism.
\end{enumerate}
\end{theorem}

Note incidentally that it follows from Theorem~\ref{thm:h-affine-maps-free-case} that $\affh(\free_n) = \aff(\free_n)$ if and only if $n=2$. Therefore the equivalent conditions given in Theorem~\ref{thm:h-affine-maps-are-affine} hold true on $\frakg=\free_n$ if and only if $n=2$. Theorem \ref{thm:h-affine-maps-are-affine} can be efficiently applied in several concrete situations which will be discussed in  Section~\ref{exx} and to which we refer for more details.

Before closing this introduction, we briefly go back to the relationship between h-affine functions and precisely monotone sets, as defined in  \cite{CheegerKleiner10} in the Heisenberg setting. More generally, a subset of a Carnot algebra, identified with a Carnot group, is said to be precisely monotone if the restriction of its characteristic function to each integral curve of every left-invariant horizontal vector field is monotone when seen as a function from $\R$ to $\R$. Equivalently, a precisely monotone set is a h-convex set with h-convex complement, see for instance \cite{Rickly06} for more details about h-convex sets. Precisely monotone sets have been classified in the first Heisenberg algebra $\free_2$, in higher dimensional Heisenberg algebras, and in the direct product $\free_2\times \R$, see 
 \cite{CheegerKleiner10,NaorYoung,M1}. In the aforementioned step-2 settings, it turns out that the boundary of a non empty precisely monotone strict  subset is a hyperplane, while in step-3 Carnot algebras the same statement may be false, see~\cite{Bellettini-LeDonne,MR4228617}. As a consequence of our results, we actually get plenty of examples of Carnot algebras already in the step-2 case where there are precisely monotone subsets whose boundary is not  a hyperplane. Indeed, it can easily be seen that sublevel sets of h-affine functions are precisely monotone. Therefore if $\frakg$ is a step-2 Carnot algebra that is not $\calI$-null and if $f\in \affh(\frakg) \setminus \aff(\frakg)$ then every sublevel set of $f$ is a precisely monotone set whose boundary is not a hyperplane. We refer to the recent paper  \cite{mr} for a more detailed introduction about precisely monotone sets as well as for a classification of such sets in the step-2 rank-3 case in terms sublevel sets of h-affine functions, and for further discussions about higher rank and higher step cases. To conclude these observations, let us mention that measurable precisely monotone sets, and therefore sublevel sets of h-affine functions on step-2 Carnot algebras, can be proved to be local minimizers for the intrinsic perimeter, see \cite[Proposition~3.9]{young2021areaminimizing} and \cite[Proposition~2.9]{mr}.

The  rest of this paper is organized as follows. Section~\ref{sect:preliminaries} contains our conventions and notations about step-2 Carnot algebras and  h-affine functions. We also provide  easy facts that will be useful for later arguments. In Section~\ref{sect:h-affine-maps-free-groups} we focus on the the free case and prove  Theorem~\ref{thm:h-affine-maps-free-case}. Theorems~\ref{thm:h-affine-maps-nonfree-case} and~\ref{thm:h-affine-maps-vs-surjective-Carnot-morphism} are proved in Section~\ref{sect:h-affine-maps-arbitrary-case}. Section~\ref{subsect:h-affine-maps-are-affine} is devoted to the proof of Theorem~\ref{thm:h-affine-maps-are-affine} and Section~\ref{exx} to a discussion of several  examples. In the final Section~\ref{sect:appendix-algebra} we gather notations and facts in linear and exterior algebra.

\textit{Acknowledgements.} The authors are grateful to an anonymous reader of a previous version of this paper for valuable comments and remarks that helped them to improve and simplify the exposition.
%%%%%%%%%%%

\section{Step-2 Carnot algebras and horizontally affine functions} \label{sect:preliminaries} 

We recall that a real\footnote{As already mentioned in the introduction, our arguments and results can be verbatim extended to finite dimensional nilpotent Lie algebras of step 2 over an arbitrary field of characteristic zero.} and finite dimensional Lie algebra $\frakg$ is said to be nilpotent of step 2 if the derived algebra $\frakg_2 := [\frakg,\frakg]$ is non trivial, i.e., $\frakg_2\not=\{0\}$, and central, i.e., $[\frakg,\frakg_2] = \{0\}$. Here, given $U, V \subset \frakg$, we denote by $[U,V]$ the linear subspace of $\frakg$ generated by elements of the form $[u,v]$ with $u\in U$, $v\in V$. If $\frakg_1$ is a linear subspace of $\frakg$ that is in direct sum with $\frakg_2$ then $[\frakg_1,\frakg_1] = \frakg_2$ and the decomposition $\frakg = \frakg_1 \oplus \frakg_2$ is therefore a stratification of $\frakg$. As a matter of fact, every stratification of a nilpotent Lie algebra of step 2 is of this form.

A step-2 Carnot algebra $\frakg$ is a Lie algebra nilpotent of step 2 equipped with a stratification $\frakg = \frakg_1 \oplus \frakg_2$. The rank of $\frakg$ is defined as $\rank \frakg:= \dim \frakg_1$. Such a Lie algebra is naturally endowed with the group law given by 
\begin{equation*}
x\cdot y := x+y+[x,y]
\end{equation*} 
for $x,y \in \frakg$ that makes it a step-2 Carnot group. It is actually well known that any step-2 Carnot group can be realized in this way. We shall therefore view a step-2 Carnot algebra both as a Lie algebra and group.

Throughout this paper, we shall always denote by $\frakg = \frakg_1 \oplus \frakg_2$ a step-2 Carnot algebra. Given $t\in \R$, $x\in \frakg$, we set $x^t := tx$. 

\begin{definition} \label{def:A-affine-maps}
Given $A\subset \frakg$ we say that $f:\frakg\rightarrow \R$ is $A$-affine if for every $x\in \frakg$,  $y \in A$, the function $t\in\R \mapsto f(x\cdot y^t)$ is affine. 
\end{definition}

When $A=\frakg$, one recovers the notion of real valued affine functions on $\frakg$ seen as a vector space. Indeed, since $\frakg$ is nilpotent of step 2, for $x,y \in \frakg$, $t\in \R$, we have $x\cdot y^t = x+t(y+[x,y])$ and $x+ty = x\cdot(y-[x,y])^t$. Therefore $f:\frakg\rightarrow \R$ is $\frakg$-affine if and only if for every $x,y \in \frakg$, $t\in \R$, the function $t\in \R \mapsto f(x+ty)$ is affine, i.e., $f$ is affine, see Proposition~\ref{prop:affine-maps}. In particular, real valued affine functions are $A$-affine for every $A\subset \frakg$. 

In the present paper, we are interested in $\frakg_1$-affine functions, which we shall call \emph{horizontally affine}, \emph{h-affine} in short, namely:

\begin{definition} [h-affine functions] \label{def:h-affine-maps}
We say that $f:\frakg\rightarrow\R$ is \emph{horizontally affine}, \emph{h-affine} in short, if $f$ is $\frakg_1$-affine. In other words, $f$ is h-affine if for every $x \in \frakg$, $y\in \frakg_1$, the function $t\in\R\mapsto f(x\cdot y^t)$ is affine. We denote by $\affh(\frakg)$ the real vector space of  h-affine functions on $\frakg$. 
\end{definition}

We say that $\ell \subset \frakg$ is a horizontal line if there are $x\in \frakg$, $y\in \frakg_1 \setminus \{0\}$ such that $\ell = \{x\cdot y^t:\, t\in\R\}$. We already noticed that horizontal lines are 1-dimensional affine subspaces of $\frakg$ and therefore h-affine functions can equivalently be defined as functions whose restriction to every horizontal line is affine. 

We recall that a Carnot morphism $\pi :\frakg \rightarrow \frakg'$ between step-2 Carnot algebras $\frakg =\frakg_1 \oplus \frakg_2$ and $\frakg' = \frakg'_1 \oplus \frakg'_2$ is a homomorphism of graded Lie algebras, which means that $\pi$ is a linear map such that $\pi([x,y]) = [\pi(x),\pi(y)]$ for all $x,y\in \frakg$ and $\pi(\frakg_k) \subset \frakg'_k$ for $k=1,2$. Note that a Carnot morphism is both a homomorphism of graded Lie algebras and a group homomorphism.

\begin{lemma}  \label{lem:h-affine-maps-vs-Carnot-morphisms}
Let $\frakg, \frakg'$ be step-2 Carnot algebras and $\pi:\frakg \rightarrow \frakg'$ be a Carnot morphism. For $f\in \affh(\frakg')$ we have $f\circ \pi \in \affh(\frakg)$. If $\pi$ is surjective then $f\in \affh(\frakg')$ if and only if $f\circ \pi \in \affh(\frakg)$.
\end{lemma}

\begin{proof} Carnot morphisms map affinely horizontal lines to either horizontal lines or singletons therefore $f\circ \pi \in \affh(\frakg)$ when  $f\in \affh(\frakg')$. If the Carnot morphism  $\pi:\frakg \rightarrow \frakg'$ is surjective then every horizontal line in $\frakg'$ is the affine image through $\pi$ of a horizontal line in $\frakg$ and therefore $f\in \affh(\frakg')$ when $f\circ \pi \in \affh(\frakg)$.
\end{proof}

For $t\in \R^*:= \R \setminus \{0\}$ the Carnot dilation $\delta_t:\frakg \rightarrow \frakg$ is defined as the linear map such that $\delta_t(x) = t^k x$ for $x\in \frakg_k$, $k=1,2$. The family $(\delta_t)_{t\in\R^*}$ is a one parameter group of Carnot automorphisms. Recall  that, given a non negative integer $i$, we denote by $\affh (\frakg)_i:=\{f\in \affh (\frakg):\, f\circ \delta_t = t^i f \text{ for all }  t\in\R^* \}$ the linear subspace of $\affh(\frakg)$ of $i$-homogeneous h-affine functions on $\frakg$. Since dilations commute with Carnot morphisms, we get from Lemma~\ref{lem:h-affine-maps-vs-Carnot-morphisms} the following corollary.

\begin{corollary} \label{cor:h-affine-maps-i-vs-Carnot-morphisms}
Let $\frakg, \frakg'$ be step-2 Carnot algebras, $\pi:\frakg \rightarrow \frakg'$ be a Carnot morphism, and $i$ be a non negative integer. For $f\in \affh(\frakg')_i$ we have $f\circ \pi \in \affh(\frakg)_i$. If $\pi$ is surjective then $f\in \affh(\frakg')_i$ if and only if $f\circ \pi \in \affh(\frakg)_i$.
\end{corollary}

We already noticed that the set $\aff(\frakg)$ of real valued affine functions on $\frakg$ is a linear subspace of $\affh(\frakg)$. More precisely, we have the following inclusion.

\begin{lemma} \label{lem:aff-contained-in-affhi<=2}
$\aff (\frakg)$ is a linear subspace of $\bigoplus_{i=0}^2 \affh (\frakg)_i$.
\end{lemma} 

\begin{proof}
Let $f\in \aff (\frakg)$. There are $f_0\in \R$ and linear forms $f_k:\frakg_k \rightarrow \R$, $k=1,2$, such that $f(x+z) = f_0 + f_1(x) + f_2(z)$ for all $x\in\frakg_1$, $z\in \frakg_2$. Clearly, constant functions belong to $\affh(\frakg)_0$ and the functions $x+z \in \frakg_1\oplus \frakg_2 \mapsto f_1(x)$ and $x+z \in \frakg_1\oplus \frakg_2 \mapsto f_2(z)$ belong  to $\affh(\frakg)_1$ and $\affh(\frakg)_2$ respectively.
\end{proof}

We say that step-2 Carnot algebras are isomorphic if there is a bijective Carnot morphism from one to the other. Note that being h-affine, respectively affine, are intrinsic properties, in particular $\affh(\frakg) = \aff(\frakg)$ if and only if $\affh(\frakg') = \aff(\frakg')$ for isomorphic step-2 Carnot algebras $\frakg, \frakg'$. This indeed more explicitly follows from Lemma~\ref{lem:h-affine-maps-vs-Carnot-morphisms} together with the fact that Carnot morphisms are linear maps.

Let us recall that by the universal property of free step-2 Carnot algebras, see Section~\ref{sect:introduction} for our conventions about the free step-2 rank-$n$ Carnot algebra $\free_n$, given a step-2 rank-$r$ Carnot algebra $\frakg$ and given an integer $n\geq r$, there is a surjective Carnot morphism $\pi:\free_n \rightarrow \frakg$, see for instance~\cite[p.45]{MR1218884}. We also recall that for such a Carnot morphism, $\ker \pi$ is a graded ideal in $\free_n$, which means that $\ker \pi = \fraki_1 \oplus \fraki_2$ where $\fraki_k$ are linear subspaces of $\Lambda^k \R^n$, $k=1,2$, such that $\theta \wedge \theta' \in \fraki_2$ for all $\theta \in \Lambda^1\R^n$, $\theta'\in \fraki_1$.

We now recall the definition of $\calI$-null Lie algebras that will be used in one of our  characterizations of those step-2 Carnot algebras where h-affine functions are affine, see Theorem~\ref{thm:h-affine-maps-are-affine}.

\begin{definition}[\cite{MR3086803}] \label{def:I-null}
A Lie algebra $\frakm$ is said to be $\calI$-null if for every symmetric bilinear invariant form $B:\frakm\times \frakm \rightarrow \R$ we have $B(\frakm, [\frakm,\frakm])= 0$.  Here $B$ is said to be invariant if $B(x,[y,z]) = B([x,y],z)$ for all $x,y,z \in \frakm$, or equivalently, if the trilinear form $B(\cdot,[\cdot,\cdot])$ is alternating on $\frakm \times \frakm \times \frakm$. 
\end{definition}

For step-2 Carnot algebras, the previous definition can be rephrased in the following way, of which we omit the elementary proof.

\begin{proposition} \label{prop:I-null} 
A step-2 Carnot algebra $\frakg = \frakg_1 \oplus \frakg_2$ is $\calI$-null if and only if every bilinear form $b:\g_1\times\g_2\to\R$ satisfying $b(x, [x,y])=0$ for all $x,y\in\g_1$ vanishes identically on $\g_1\times\g_2$.
\end{proposition}

\begin{remark}\label{rmk:subla} 
In the present article, we focus on step-2 Carnot algebras or, equivalently, step-2 Carnot groups. Let us mention that the notion of horizontally affine function makes sense in broader generality. One may for instance consider Carnot groups of arbitrary step (see~\cite{MR3587666},~\cite{LeDonne:Carnot} for a primer on the subject) or, more generally, a connected nilpotent Lie group $G$ equipped with a  vector  subspace $\Delta$ of its Lie algebra $\frakg$ that Lie generates $\frakg$. Then we say that $f: G \to \R$ is  $\Delta$-affine if for every $X\in \Delta$ the restriction of $f$ to each integral curve of $X$ is affine when seen as a function from $\R$ to $\R$. Here an element $X\in \Delta$ is seen as a left-invariant vector field on $G$. When $G$ is a step-2 Carnot group with stratified Lie algebra $\frakg = \frakg_1 \oplus \frakg_2$ and $\Delta=\frakg_1$ is the first, usually called horizontal, layer of the stratification of $\frakg$, one recovers Definition~\ref{def:h-affine-maps}, and this latter definition can hence be extended to Carnot groups of arbitrary step in the obvious way. Going back to the aforementioned more general setting and considering $G$ equipped with a Haar measure, let us mention that we have the following characterizations of locally integrable $\Delta$-affine functions. Namely, $f \in L^1_{\text{loc}}(G)$  is $\Delta$-affine if and only if one of the following equivalent conditions holds true in the distributional sense: 
\begin{enumerate}
\item[(A.1)] $X^2 f=0$ for every $X\in \Delta$
\smallskip
\item[(A.2)] $X Y f + Y X f = 0$ for every $X, Y \in \Delta$
\smallskip
\item[(A.3)] $X_1^2 f + \cdots+ X_m^2 f =0$ for every basis $(X_1, \ldots, X_m)$ of $\Delta$.
\end{enumerate}
Indeed, if $f \in L^1_{\text{loc}}(G)$  is $\Delta$-affine then (A.1) holds true as a consequence of the very definitions. Conversely, if $f \in L^1_{\text{loc}}(G)$ satisfies (A.1) then $f$ is smooth by H\"ormander's hypoellipticity theorem and it then clearly follows from (A.1) that $f$ is $\Delta$-affine. The fact that (A.1) is equivalent to (A.2) is a consequence of H\"ormander's hypoellipticity theorem together with the identity $(X+Y)^2 f =   X^2 f +XY f +Y Xf +Y^2 f$ for smooth functions $f$. Condition~(A.1) obviously implies (A.3). Conversely, if $f \in L^1_{\text{loc}}(G)$ satisfies (A.3) and $X\in\Delta\setminus\{0\}$, one can complete $X$ into a basis  $X,X_2, \dots, X_m$ of $\Delta$. Then for every $\epsilon>0$ one has $X^2 f + \epsilon X_2^2 f + \cdots+ \epsilon X_m^2 f=0$ with the left-handside converging to $X^2 f$ as $\epsilon \to 0$ and therefore $X^2 f=0$. See also~\cite{antonelli2020polynomial} for other generalizations of condition~(A.1) for locally integrable functions.

To conclude this remark, note that in the specific setting considered in this paper, i.e., step-2 Carnot algebras $\frakg = \frakg_1 \oplus \frakg_2$, it follows from Theorem~\ref{thm:h-affine-maps-vs-surjective-Carnot-morphism} that h-affine functions are smooth and hence locally integrable. Therefore each of the distributional sense conditions (A.1), (A.2), (A.3) with $\Delta=\frakg_1$ makes sense for all h-affine functions and hence characterizes such a class of functions. In the more general setting considered in the present remark, it is however not clear to us whether $\Delta$-affinity implies local integrability, and the class of locally integrable $\Delta$-affine functions, that can be characterized through each of the equivalent conditions (A.1), (A.2), (A.3), could therefore be a strict subset of the class of $\Delta$-affine functions.
\end{remark}

%%
%%%%%%%%%%%%%%

\section{Horizontally affine functions on free step-2 Carnot algebras} \label{sect:h-affine-maps-free-groups}

This section is devoted to the proof of Theorem~\ref{thm:h-affine-maps-free-case}. The proof will proceed into 4 steps. We first verify in Lemma~\ref{lem:simple-factors-free-case} that for $\eta \in \Lambda^{n-i} \R^n$ we have $\varphi_\eta \in \affh(\free_n ,\Lambda^n\R^n)_i$ where $\varphi_\eta$ is given by~\eqref{e:def-phieta}, together with the injectivity of the linear map $\eta \in \Lambda^{n-i} \R^n \mapsto \varphi_\eta \in \affh(\free_n ,\Lambda^n\R^n)_i$.  We shall next prove Theorem~\ref{thm:h-affine-maps-free-case} for $n=2$, see Theorem~\ref{thm:heisenberg}, and deduce properties of h-affine functions on $\free_n$ for $n\geq 3$ to be used in the next step, see Proposition~\ref{prop:Sigma-affine}. When $n\geq 3$, we first  prove  that $\affhn = \bigoplus_{i=0}^n \affhni$ together with preliminary information about elements in $\affhni$, see Proposition~\ref{prop:step1-free-case}. We then upgrade these information in Proposition~\ref{prop:step2-free-case} to get the description stated in Theorem~\ref{thm:h-affine-maps-free-case}.

For notational convenience, we identify in this section $\free_n$ with $\Lambda^1\R^n \times \Lambda^2\R^n$ and write elements in $\free_n$ as $x=(\theta,\omega)$ with $\theta \in \Lambda^1\R^n$, $\omega\in\Lambda^2\R^n$. In the next lemma, we denote by $\affh(\free_n ,\Lambda^n\R^n)_i$ the analogue of $\affhni$ for $\Lambda^n\R^n$-valued functions. More explicitly, $f:\free_n \rightarrow\Lambda^n\R^n$ belongs to $\affh(\free_n ,\Lambda^n\R^n)_i$ if and only if for every $(\theta,\omega)\in \free_n$,  $\theta' \in \Lambda^1\R^n$, the function $t\in\R \mapsto f((\theta,\omega)\cdot (t\theta',0)) \in \Lambda^n\R^n$ is affine, and $f\circ \delta_t = t^i f$ for all $t\in \R^*$.

\begin{lemma} \label{lem:simple-factors-free-case}
For $n\geq 2$, $i\in \{0,\dots,n\}$, and $\eta\in \Lambda^{n-i} \R^n$, we have $\varphi_\eta \in \affh(\free_n ,\Lambda^n\R^n)_i$ where $\varphi_\eta$ is given by~\eqref{e:def-phieta}. Furthermore, the linear map $\eta \in \Lambda^{n-i} \R^n \mapsto \varphi_\eta \in \affh(\free_n ,\Lambda^n\R^n)_i$ is injective.
\end{lemma}

\begin{proof}
Let $\eta\in \Lambda^{n-i} \R^n$. Clearly $\varphi_\eta \circ \delta_t = t^i \varphi_\eta$ for all $t\in \R^*$. If $i=2k$ is even, we have
\begin{equation*}
\varphi_\eta((\theta,\omega)\cdot (t\theta',0)) = (\omega+t\theta\wedge \theta')^k \wedge \eta = \omega^k\wedge \eta + t k\omega^{k-1}\wedge\theta\wedge\theta'\wedge\eta,
\end{equation*}
if $i=2k+1$ is odd,
\begin{equation*}
\varphi_\eta((\theta,\omega)\cdot (t\theta',0)) = (\theta+t\theta') \wedge (\omega+t\theta\wedge \theta')^k \wedge \eta = \theta \wedge \omega^k \wedge \eta + t \theta' \wedge \omega^k \wedge \eta,
\end{equation*}
for all $(\theta,\omega)\in \free_n$, $\theta'\in \Lambda^1\R^n$.  Therefore $\varphi_\eta \in \affh(\free_n,\Lambda^n\R^n)_i$. For the injectivity of the linear map $\eta \in \Lambda^{n-i} \R^n \mapsto \varphi_\eta \in \affh(\free_n ,\Lambda^n\R^n)_i$, see Corollary~\ref{cor:injectivity}.
\end{proof}

\begin{theorem} \label{thm:heisenberg}
We have $\affh(\free_2) = \aff(\free_2)$.
\end{theorem}

\begin{proof}
We recall that a set $\ell \subset \free_2$ is said to be a horizontal line if $\ell=\{(\theta,\omega)\cdot (t\theta',0) : t\in\R\}$ for some $(\theta,\omega)\in \free_2$, $\theta'\in \Lambda^1\R^2 \setminus\{0\}$. Define the h-affine hull of a set $A\subset \free_2$ as the smallest set $C$ containing $A$ with the property that if a horizontal line $\ell$ meets $C$ in more than one point then $\ell \subset C$. It follows from~\cite[Lemma~4.10]{CheegerKleiner10} that there are 4 points in $\free_2$ whose h-affine hull is $\free_2$. Indeed, given linearly independent $\theta,\theta' \in \Lambda^1\R^2$, the h-affine hull $C$ of $\{(0,0), (\theta,0), (\theta',0),(\theta+\theta',\theta \wedge \theta')\}$ contains a pair of parallel lines with distinct projection in the sense of~\cite{CheegerKleiner10}, namely, the horizontal line through $(0,0)$ and $(\theta',0)$ and the horizontal line through $(\theta,0)$ and $(\theta+\theta',\theta \wedge \theta')$, therefore $C =\free_2$ by~\cite[Lemma~4.10]{CheegerKleiner10}. This implies that $\affh(\free_2)$ is a vector space with dimension $\leq$ 4. Since $\aff(\free_2)$ is a 4-dimensional linear subspace of $\affh(\free_2)$, we get that $\affh(\free_2) = \aff(\free_2)$, as claimed.
\end{proof}

Note that Theorem~\ref{thm:h-affine-maps-free-case} for $n=2$ follows from Lemma~\ref{lem:aff-contained-in-affhi<=2}, Lemma~\ref{lem:simple-factors-free-case} and Theorem~\ref{thm:heisenberg}. For $n\geq 3$ we set $\Sigma_n: = \bigcup_{\theta,\theta'\in \Lambda^1\R^n} \Lie(\theta,\theta')$ where  $\Lie (\theta,\theta'):=\Span \{\theta,\theta'\} \times \Span \{\theta \wedge \theta'\}$ denotes the Lie subalgebra of $\free_n$ generated by $\theta, \theta'\in \Lambda^1\R^n$. We refer to Definition~\ref{def:A-affine-maps} for the definition of $\Sigma_n$-affine functions.

\begin{proposition} \label{prop:Sigma-affine} For $n\geq 3$, $f\in \affhn$, the following hold true:
\begin{gather} 
f \text{ is }  \Sigma_n\text{-affine}, \label{e:Sigma-affine}\\
f_\omega \in \aff(\Lambda^1\R^n)  \text{ for all } \omega \in \Lambda^2\R^n, \label{e:f_omega}
\end{gather}
where $f_\omega:   \Lambda^1\R^n \rightarrow \R$ is given by  $f_\omega(\theta):= f(\theta,\omega)$ and $\aff(\Lambda^1\R^n)$ denotes the space of real valued affine functions on $\Lambda^1\R^n$.
\end{proposition}

\begin{proof}
Clearly, composing h-affine functions with left-translations yields h-affine functions. Therefore, to prove that every $f\in \affhn$ is $\Sigma_n$-affine, we only need to verify that for every $f\in \affhn$, $\theta_1,\theta_2 \in \Lambda^1\R^n$, $\theta_1 \wedge \theta_2 \not=0$, $(\theta,\omega)\in \Lie(\theta_1,\theta_2)$, the function $t\in\R \mapsto f(t\theta,t\omega)$ is affine. Set $\frakh:=\Lie(\theta_1,\theta_2)$ and denote by $f_\frakh$ the restriction of $f$ to $\frakh$. On the one hand, the structure of step-2 Carnot algebra of $\free_n$ induces on $\frakh$ a structure of step-2 Carnot algebra that makes it isomorphic to $\free_2$. Therefore $\affh(\frakh) = \aff(\frakh)$ by Theorem~\ref{thm:heisenberg}. On the other hand, $f_\frakh \in \affh(\frakh)$. Thus $f_\frakh \in \aff(\frakh)$, which implies that for all $(\theta,\omega)\in \frakh$, the function $t\in\R \mapsto f(t\theta ,t\omega)$ is affine, and concludes the proof of~\eqref{e:Sigma-affine}. To prove~\eqref{e:f_omega}, note that for $\omega\in \Lambda^2\R^n$, $\theta,\theta'\in \Lambda^1\R^n$, $t\in\R$, we have $(\theta + t\theta',\omega) = (\theta,\omega)\cdot (t\theta',t\theta'\wedge \theta)$. Since $(\theta',\theta'\wedge \theta) \in \Sigma_n$, it follows from Proposition~\ref{prop:affine-maps} that $f_\omega \in \aff(\Lambda^1\R^n)$ for every $\Sigma_n$-affine function $f$, and hence, in particular for $f\in \affhn$ by~\eqref{e:Sigma-affine}.
\end{proof}

In addition to the notations given in the appendix, see Section~\ref{sect:appendix-algebra}, we shall use the following ones in the rest of this section. Recall that $(e^1,\dots,e^n)$ denotes a basis of $\Lambda^1\R^n$. For $\theta = \sum_{j=1}^n \theta_j\, e^j \in \Lambda^1\R^n$ we set $\theta_J:= \theta_{j_1}\cdots\theta_{j_k}$ for $J=(j_1,\dots,j_k) \in \calJ_{k}^{n}$ with the convention $\theta_\emptyset:= 1$.

We set 
\begin{equation*}
\calI:=\left\{\alpha \beta: \alpha,\beta\in \N\setminus\{0\},\, \alpha < \beta \right\} , 
\end{equation*}
and we equip $\calI$ with the lexicographic order, i.e., we write $\alpha\beta <\alpha'\beta'$ to mean either that $\alpha = \alpha'$ and $\beta<\beta'$ or that $\alpha < \alpha'$. We set $\overline{\calI}_{0}^{\, n}:=\{\emptyset\}$, 
\begin{equation*}
\overline{\calI}_{k}^{\, n}:=\left\{(\alpha_1 \beta_1, \dots,\alpha_k \beta_k)\in \calI^k:\, 12 \leq \alpha_1 \beta_1 < \cdots < \alpha_k \beta_k \leq (n-1) n\right\}
\end{equation*}
for $k\in\{1,\dots,n(n-1)/2\}$, and $\overline{\calI}^{\, n}:= \cup_{0\leq k \leq n(n-1)/2}\, \overline{\calI}_{k}^{\, n}$. We write $\spt \emptyset := \emptyset$ and $\spt I:=\{\alpha_1\beta_1,\dots,\alpha_k\beta_k\} \subset\calI$ for $I=(\alpha_1\beta_1,\dots,\alpha_k\beta_k) \in \overline{\calI}_{k}^{\, n}$. Given $I,I'\in \overline{\calI}^{\, n}$, we denote by $I\setminus I'\in\overline{\calI}^{\, n}$ the unique element in $\overline{\calI}^{\, n}$ such that $\spt (I \setminus I') = \spt I \setminus \spt I'$ and we write $I'\subset I$ to mean that $\spt I' \subset \spt I$.

For $\omega = \sum_{\alpha\beta \in \overline{\calI}_{1}^{\, n}\cong \calJ_{2}^{n}} \omega_{\alpha\beta} \, e^{\alpha\beta} \in \Lambda^2\R^n$ we set $\omega_\emptyset:= 1$ and $\omega_I:= \omega_{\alpha_1\beta_1}\cdots\omega_{\alpha_k\beta_k}$ for $I=(\alpha_1\beta_1,\dots,\alpha_k\beta_k) \in \overline{\calI}_{k}^{\, n}$.

We write $\Spt (\emptyset) := \emptyset$ and $\Spt(I):=\{\alpha_1,\beta_1,\dots,\alpha_k,\beta_k\} \subset \N $ for $I=(\alpha_1\beta_1,\dots,\alpha_k\beta_k) \in \overline{\calI}_{k}^{\, n}$. We set $\calI_{0}^{n}:=\{\emptyset\}$ and
\begin{equation*}
\calI_{k}^{n}:=\left\{(\alpha_1 \beta_1, \dots,\alpha_k \beta_k) \in \overline{\calI}_{k}^{\, n}:\, \Spt (\alpha_i\beta_i) \cap \Spt (\alpha_j\beta_j) = \emptyset \text{ for all } i\not=j \right\}~.
\end{equation*}
for $k\in\{1,\dots,n(n-1)/2\}$. Note that $\overline{\calI}_{0}^{\, n}=\calI_{0}^{n}$, $\overline{\calI}_{1}^{\, n}=\calI_{1}^{n}$. When $n\geq 3$, we have $\calI_{k}^{n} \varsubsetneq\overline{\calI}_{k}^{n}$ for $2\leq k \leq n(n-1)/2$, and $\calI_{k}^{n} =\emptyset$ for $k>\lfloor n/2\rfloor$.

\medskip

\begin{proposition} \label{prop:step1-free-case}
For $n\geq 3$ the following holds true:
\begin{enumerate}
\item[(i)] $\affhn = \bigoplus_{i=0}^n \affhni$,
\smallskip
\item[(ii)] for $k\in\{0,\dots,\lfloor n/2 \rfloor\}$, every $f\in \affhn_{2k}$  can be written as $$(\theta,\omega) \mapsto \sum_{I\in \calI_{k}^{n}} a_I \, \omega_I $$ for constants $a_I \in \R$,
\smallskip
\item[(iii)] for $k\in\{0,\dots,\lfloor (n-1)/2 \rfloor\}$, every $f\in \affhn_{2k+1}$  can be written as
$$(\theta,\omega) \mapsto  \sum_{I\in \calI_{k}^{n}} b_I(\theta) \, \omega_I~$$
for linear forms $b_I: \Lambda^1\R^n \rightarrow \R$.
\end{enumerate}
\end{proposition}

\begin{proof}
For $i\in \{1,\dots,n\}$ the $\affhni$ are linear subspaces of $\affhn$ that are in direct sum. Therefore $\bigoplus_{i=0}^n \affhni \subset \affhn$. Conversely, let $f\in \affhn$ be given.

We first prove that there are functions $c_I: \Lambda^1\R^n \rightarrow \R$, $I \in \overline{\calI}^{\, n}$, such that
\begin{equation} \label{e:step1}
f(\theta,\omega) = \sum_{I \in \overline{\calI}^{\, n}} c_I(\theta) \, \omega_I~.
\end{equation}
Let $\theta\in \Lambda^1\R^n$ be given. We know from~\eqref{e:Sigma-affine} that, for every $\omega\in \Lambda^2\R^n$, $1\leq \alpha<\beta\leq n$, the function $t\in\R \mapsto f((\theta,\omega)\cdot (0, t e^{\alpha\beta}))$ is affine. Since $f((\theta,\omega)\cdot (0, t e^{\alpha\beta})) = f(\theta,\omega+ t e^{\alpha\beta})$, it follows from elementary properties of multiaffine maps, see Proposition~\ref{prop:homogeneous-polynomial} applied to $\omega \in \Lambda^2\R^n \mapsto f(\theta,\omega)$, that there are $c_I(\theta) \in \R$, $I \in \overline{\calI}^{\, n}$, such that~\eqref{e:step1} holds true.

Next, we prove that 
\begin{equation} \label{e:step2}
c_I \in \aff(\Lambda^1\R^n) \quad \text{for all } I\in\overline{\calI}^{\, n}~.
\end{equation}
We have $\overline{\calI}^{\, n} = \cup_{0\leq k \leq n(n-1)/2}\, \overline{\calI}_{k}^{\, n}$ and we prove by induction on $k$ that $c_I \in \aff(\Lambda^1\R^n)$ for all $I\in\overline{\calI}_{k}^{\, n}$. For $k=0$ we have $\overline{\calI}_{0}^{\, n}=\{\emptyset\}$ with $c_\emptyset(\theta) = f(\theta,0)$ and we apply \eqref{e:f_omega} with $\omega = 0$  to get that $c_\emptyset \in \aff(\Lambda^1\R^n)$. Given $k\in\{1,\dots,n(n-1)/2\}$, assume that $c_{I'} \in \aff(\Lambda^1\R^n)$ for all $I'\in \cup_{0\leq i \leq k-1} \overline{\calI}_{i}^{\, n}$. For $I=(\alpha_1\beta_1,\dots,\alpha_k\beta_k)\in \overline{\calI}_{k}^{\, n}$, we apply \eqref{e:f_omega} with $\omega = \sum_{j=1}^k e^{\alpha_j\beta_j}$ to get that $$c_I \,+ \sum_{\substack{I'\in \cup_{0\leq i \leq k-1} \overline{\calI}_{i}^{\, n} \\ I'\subset I}} c_{I'}   \, \in \aff(\Lambda^1\R^n)~.$$ By induction hypothesis, we get that $c_I\in \A(\Lambda^1\R^n)$, which concludes the proof of~\eqref{e:step2}.

It follows from~\eqref{e:step2} that there are constants $a_I\in\R$ and linear forms  $b_I: \Lambda^1\R^n\to \R$, $I \in \overline{\calI}^{\, n}$, such that $c_I(\theta) = a_I + b_I(\theta)$ for every $\theta\in\Lambda^1\R^n$. For $k\in\{0,\dots,n(n-1)/2\}$, we set
\begin{equation*} 
f_{2k}(\theta,\omega) := \sum_{I\in \overline{\calI}_{k}^{\, n}} a_I \, \omega_I \quad \text{and} \quad f_{2k+1}(\theta,\omega) := \sum_{I\in \overline{\calI}_{k}^{\, n}} b_I(\theta) \, \omega_I~,
\end{equation*}
so that $f= \sum_{i=0}^{n(n-1)+1} f_i$. We claim that $f_i \in \affhni$ for all $i\in\{0,\dots,n(n-1)+1\}$. Indeed, let $(\theta,\omega) \in \free_n$, $\theta'\in \Lambda^1\R^n$, $s\in\R$ be given. Since the dilations are Carnot automorphisms, we know from Lemma~\ref{lem:h-affine-maps-vs-Carnot-morphisms} that $f\circ \delta_t \in \affhn$ for all $t\in\R^*$. Therefore
\begin{equation*}
\begin{split}
\sum_{i=0}^{n(n-1)+1} t^i f_i ((\theta,\omega)\cdot (s\theta',0)) &= (f\circ \delta_t) ((\theta,\omega)\cdot (s\theta',0)) \\
&= (f\circ \delta_t) (\theta,\omega) + s ((f\circ \delta_t) ((\theta,\omega)\cdot (\theta',0)) - (f\circ \delta_t) (\theta,\omega))\\
&= \sum_{i=0}^{n(n-1)+1}  t^i (f_i(\theta,\omega) + s (f_i ((\theta,\omega)\cdot (\theta',0)) - f_i (\theta,\omega)))
\end{split}
\end{equation*}
for all $t\in\R^*$, which implies that $f_i ((\theta,\omega)\cdot (s\theta',0)) = f_i(\theta,\omega) + s (f_i ((\theta,\omega)\cdot (\theta',0)) - f_i (\theta,\omega))$ for all $i\in\{0,\dots,n(n-1)+1\}$. Since this holds true for all $(\theta,\omega) \in \free_n$, $\theta'\in \Lambda^1\R^n$, $s\in\R$, we get that $f_i \in \affhn$. Clearly, we also have $f_i\circ \delta_t = t^i f_i$ for all $t\in \R^*$. Therefore $f_i \in \affhni$ for all $i\in\{0,\dots,n(n-1)+1\}$, as claimed.

For $k\in \{0,\dots,n(n-1)/2\}$, we now claim that 
\begin{equation} \label{e:vanishing-a_I}
a_I = 0 \quad \text{for all } I \in \overline{\calI}_{k}^{\, n} \setminus \calI_{k}^{n}~.
\end{equation}
For $k\in\{0,1\}$, we have $\overline{\calI}_{k}^{\, n} = \calI_{k}^{n}$ and there is nothing to prove.   Let $k\in \{2,\dots,n(n-1)/2\}$ be given. First, note that $I \in \overline{\calI}_{k}^{\, n} \setminus \calI_{k}^{n}$ if and only if there are integers $1\leq \alpha < \beta < \gamma \leq n$ such that either $(\alpha\beta,\alpha\gamma) \subset I$, or $(\alpha\gamma,\beta\gamma)\subset I$, or $(\alpha\beta,\beta\gamma)\subset I$. Now, let $1\leq \alpha < \beta < \gamma \leq n$ be given. On the one hand, since $f_{2k} \in \affhn$, we know that, for every $\omega\in \Lambda^2\R^n$, the function $t\in\R \mapsto f_{2k}((e^\alpha,\omega)\cdot (t(e^\beta + e^\gamma),0) =  f_{2k}(0,\omega + t ( e^{\alpha\beta} + e^{\alpha\gamma}))$ is affine. On the other hand, this function is a polynomial for which the coefficient of $t^2$, namely, 
\begin{equation*}
\sum_{\substack{I\in \overline{\calI}_{k}^{\, n} \\ (\alpha\beta,\alpha\gamma) \subset I}} a_I \, \omega_{I\setminus (\alpha\beta,\alpha\gamma)} 
\end{equation*}
must therefore vanish. Since this holds true for every $\omega \in \Lambda^2\R^n$, it follows that $a_I = 0$ for every $I\in \overline{\calI}_{k}^{\, n}$ such that $(\alpha\beta,\alpha\gamma) \subset I$. Considering the function $t\in \R \mapsto f_{2k}(0,\omega + t  (e^{\alpha\gamma}  + e^{\beta\gamma}))$, respectively $t\in \R \mapsto f_{2k}(0,\omega + t  (e^{\alpha\beta} + e^{\beta\gamma}))$, and arguing in a similar way, we get that $a_I = 0$ for every $I\in \overline{\calI}_{k}^{\, n}$ such that $(\alpha\gamma,\beta\gamma)\subset I$, respectively such that $(\alpha\beta,\beta\gamma)\subset I$, which concludes the proof of~\eqref{e:vanishing-a_I}.

For $k\in \{0,\dots,n(n-1)/2\}$, we claim that 
\begin{equation} \label{e:vanishing-b_I}
 b_I (\theta) = 0  \quad \text{for all } \theta \in \Lambda^1\R^n,\, I \in \overline{\calI}_{k}^{\, n} \setminus \calI_{k}^{n}~.
\end{equation}
Indeed, let $k\in \{0,\dots,n(n-1)/2\}$, $\theta \in \Lambda^1\R^n$ be given. Consider the function $g:\free_n \rightarrow \R$ given by $g(\tau,\omega):=f_{2k+1}(\theta,\omega)$ for $(\tau,\omega)\in \free_n$. We have $g((\tau,\omega)\cdot (t\tau',0)) = f_{2k+1}((\theta,\omega)\cdot(0,t\tau\wedge\tau'))$ for all $(\tau,\omega) \in \free_n$, $\tau'\in \Lambda^1\R^n$. Since $f_{2k+1} \in \affhn$, it follows from~\eqref{e:Sigma-affine} that $g\in \affhn$. We then argue as for the proof of~\eqref{e:vanishing-a_I} to get that $b_I(\theta) = 0$ for every $I \in \overline{\calI}_{k}^{\, n} \setminus \calI_{k}^{n}$, which concludes the proof of~\eqref{e:vanishing-b_I}.

For $k\in\{\lfloor n/2\rfloor +1,\dots,n(n-1)/2\}$, we have $\calI_{k}^{n} = \emptyset$ and it follows from~\eqref{e:vanishing-a_I} and~\eqref{e:vanishing-b_I} that $f_{2k} =f_{2k+1} = 0$. 

We now prove that $f_{n+1}=0$ whenever $n$ is even. Assume that $n=2p$ with $p\geq 2$. Let $I=(\alpha_1\beta_1,\dots,\alpha_p\beta_p) \in \calI_{p}^{2p}$ be given. Note that  $\Spt (I) = \{1,\dots,2p\}$. Therefore, to show that $b_I=0$,  we need to verify that $b_I(e^{\alpha_j}) = b_I(e^{\beta_j}) = 0$ for every $j\in\{1,\dots,p\}$. Let $j\in\{1,\dots,p\}$ be given. Set $\omega_j:=\sum_{1\leq i \leq p,\, i\not= j} e^{\alpha_i\beta_i}$. On the one hand, since $f_{n+1} \in \affhn$, the function $t\in\R\mapsto f_{n+1}((e^{\alpha_j},\omega_j) \cdot (te^{\beta_j},0))$ is affine. On the other hand,
\begin{equation*}
\begin{split}
f_{n+1}((e^{\alpha_j},\omega_j) \cdot (t e_{\beta_j},0)) &= f_{n+1}(e^{\alpha_j} + t e^{\beta_j}, t e^{\alpha_j\beta_j} + \omega_j) \\
&= t b_I(e^{\alpha_j} + t e^{\beta_j}) = t b_I(e^{\alpha_j}) + t^2 b_I(e^{\beta_j})~.
\end{split}
\end{equation*}
Therefore the coefficient of $t^2$ vanishes, i.e., $b_I(e^{\beta_j})=0$. To prove that $b_I(e^{\alpha_j})=0$, we argue in a similar way considering the function $t\in\R\mapsto f_{n+1}((-e^{\beta_j},\omega_j) \cdot (t e^{\alpha_j},0))$.

All together we have shown that $f = \sum_{i=0}^{n} f_i$ with $f_i \in \affhni$ that can be written as in~(ii) when $i=2k$ is even, respectively, as in~(iii) when $i=2k+1$ is odd, which concludes the proof of the proposition.
\end{proof}

\begin{proposition} \label{prop:step2-free-case}
For $n \geq 3$, $i\in \{0,\dots,n\}$, $\nu \in \Lambda^n \R^n \setminus \{0\}$, every $f\in \affhni$  can be written as $f\nu =\varphi_\eta$ for some $\eta \in \Lambda^{n-i}\R^n$.
\end{proposition}

\begin{proof}
Assume with no loss of generality that $\nu = e^1 \wedge \cdots \wedge e^n$. We first prove the proposition when $i=2k$ is even. Let $f\in \affhn_{2k}$ and let $a_I \in \R$, $I\in \calI_{k}^{n}$, be given by Proposition~\ref{prop:step1-free-case}~(ii) so that
\begin{equation*}
f(\theta,\omega) = \sum_{I\in \calI_{k}^{n}} a_I \, \omega_I~.
\end{equation*}
For $k=0$ we get that $f$ is constant and the required conclusion clearly holds true. Next, let us consider the case $n=2p$ with $p\geq 2$ and $k=p$. For $\omega \in \Lambda^2\R^{2p}$ we have
\begin{equation*}
 \om^p =  p!\sum_{I\in \calI^{2p}_p }\sigma (I) \om_I \, \nu 
\end{equation*}
where, given $I=(\a_1\b_1,\dots,\a_p\b_p)\in\calI^{2p}_p$, $\sigma(I)$ denotes the signature of the permutation of $\{1,\dots,2p\}$ given by $(1,2,\dots, 2p) \mapsto (\a_1,\b_1,\dots,\a_p,\b_p)$. Therefore it suffices to prove that
\begin{equation} \label{e:n=2p-k=p}
\sigma(I) a_I = \sigma(I') a_{I'} \,\, \text{ for all } I,I' \in \calI^{2p}_p.
\end{equation}
On the one hand, since $f\in \affh(\free_{2p})$, we know that for all $(\theta,\omega) \in \free_{2p}$, $\theta'\in\Lambda^1\R^{2p}$, the function
\begin{equation*}
 t\in \R \mapsto f((\theta,\omega)\cdot(t\theta',0)) = \sum_{I\in\calI^{2p}_p} a_I \,(\om+t\theta\wedge\theta')_I
\end{equation*}
is affine. On the other hand, this function is a polynomial for which the coefficient of $t^2$ is given by
\begin{equation*}
 \sum_{I\in\calI^{2p}_p}  a_I \sum_{\substack{ H,K\in\calI^{2p}_1\\H, K\subset  I,\; H\neq K}} \om_{I\setminus(H\cup K)}\, (\theta\wedge\theta')_H \, (\theta\wedge\theta')_K =\sum_{ L\in\calI^{2p}_{p-2}  }\om_L \sum_{\substack{ I\in\calI^{2p}_p\\I\supset L}} a_I \, (\theta\wedge\theta')_{I\setminus L}
\end{equation*}
and must therefore vanish. Since this holds true for all $\om\in\Lambda^2\R^{2p}$, it follows that for all $L\in\calI^{2p}_{p-2}$, $\theta, \theta'\in\Lambda^1\R^{2p}$,
\begin{equation*} 
  \sum_{\substack{ I\in\calI^{2p}_p\\I\supset L}} a_I \, (\theta\wedge\theta')_{I\setminus L} =0.
\end{equation*}
Now let $L\in\calI^{2p}_{p-2}$ be given and let $1\leq \alpha < \beta < \delta < \gamma \leq 2p$ be such that $\{1,\dots,2p\} \setminus \Spt (L) = \{\alpha,\beta,\delta,\gamma\}$. Then the previous equality reads as
\begin{equation*}
\begin{split}
a_{L\cup (\alpha \beta,\delta \gamma)}(\theta\wedge\theta')_{\alpha \beta}& (\theta\wedge\theta')_{\delta \gamma} \\
&+ a_{L\cup (\alpha \delta, \beta \gamma)} (\theta\wedge\theta')_{\alpha \delta} (\theta\wedge\theta')_{\beta \gamma} + a_{L\cup (\alpha \gamma,\beta \delta)} (\theta\wedge\theta')_{\alpha \gamma} (\theta\wedge\theta')_{\beta \delta} =0
\end{split}
\end{equation*}
for all $\theta, \theta'\in\Lambda^1\R^{2p}$. Looking at the coefficient of $\theta_\alpha \theta'_\beta \theta_\delta \theta'_\gamma$ we get that $a_{L\cup (\alpha \beta,\delta \gamma)} = a_{L\cup (\alpha \gamma,\beta \delta)}$. Looking at the coefficient of $\theta_\alpha \theta'_\delta \theta_\beta \theta'_\gamma$ we get that $a_{L\cup (\alpha \delta, \beta \gamma)} = - a_{L\cup (\alpha \gamma,\beta \delta)}$. Therefore we have proved that $\sigma(I) a_I=\sigma(I') a_{I'}$ for all $I, I'\in\calI^{2p}_p$ such that $I, I'\supset L$ for some $L\in \calI^{2p}_{p-2}$. Since one can pass from any $I\in\calI^{2p}_p$ to any $I'\in\calI^{2p}_p$ by a finite number of such steps, \eqref{e:n=2p-k=p} follows.

Let us now consider the case $n\geq 3$ and $i=2k$ is even with $2\leq i \leq n-1$. For $J \in \calJ_{2k}^n$ set $\calI_{k}^J := \{I \in \calI_k^n: \Spt (I) =  \spt J   \}$  and define $f_J:\free_n\rightarrow\R$ by $f_J(\theta,\omega) := \sum_{I\in\calI_{k}^J} a_I\, \omega_I$ so that
\begin{equation*}
f= \sum_{J \in \calJ_{2k}^n} f_J~.
\end{equation*}
Set $\Lambda^1 J := \Span\{e^j: j\in \spt J\}$ and $\Lambda^2 J := \Span\{e^{jj'}: j,j'\in \spt J\}$. We have $f_J((\theta,\omega)\cdot(t\theta',0)) = f((\theta,\omega)\cdot(t\theta',0))$ for all $\theta,\theta'\in\Lambda^1 J$, $\omega\in\Lambda^2 J$. Since $f\in \affhn$, it follows that the restriction of $f_J$ to $\Lambda^1 J \times \Lambda^2 J \cong \free_{2k}$ belongs to $ \affh(\free_{2k})$. Since $(f_J \circ \delta_t)(\theta,\omega) = t^{2k}f_J(\theta,\omega)$ for all $(\theta,\omega) \in \Lambda^1 J \times \Lambda^2 J$ and all $t\in\R^*$, we get that the restriction of $f_J$ to $\Lambda^1 J \times \Lambda^2 J$ belongs to $\affh(\free_{2k})_{2k}$ and it follows from the previous case that there is $\eta_J\in\R$ such that $f_J(\theta,\omega)\,  e^J= \eta_J \, \omega^k$ for all $(\theta,\omega)\in \Lambda^1 J \times \Lambda^2 J$. Since $f_J$ does not depend on $\theta$, this equality holds actually true for all $\theta \in \Lambda^1\R^n$, $\omega\in \Lambda^2 J$. For $(\theta,\omega) \in \free_n$, we have $f_J(\theta,\omega) = f_J(\theta,\Pi_J(\omega))$ where $\Pi_J:\Lambda^2\R^n\rightarrow\Lambda^2 J$ denotes the projection map given by $\Pi_J(\omega):=\sum_{\substack{1\leq j<j'\leq n \\ j,j'\in\spt J  }}
 \om_{jj'}\, e^{jj'}$. Therefore, for $(\theta,\omega) \in \free_n$, we have
\begin{equation*}
f(\theta,\omega)\, \nu = \sum_{J \in \calJ_{2k}^n} f_J(\theta,\Pi_J(\omega))\, \nu = \sum_{J \in \calJ_{2k}^n} \sigma_J \, \eta_J \, \Pi_J(\omega)^k \wedge e^{J^c}
\end{equation*}
where $\sigma_J \in \{-1,1\}$ is such that $\nu = \sigma_J \, e^J \wedge e^{J^c}$. Now, note that $(\omega-\Pi_J(\omega))\wedge e^{J^c}=0$, therefore $\Pi_J(\omega)^k \wedge e^{J^c} = \omega^k \wedge e^{J^c}$, and the previous equality becomes 
\begin{equation*}
f(\theta,\omega)\, \nu = \sum_{J \in \calJ_{2k}^n} \sigma_J \, \eta_J \, \omega^k \wedge e^{J^c}  = \omega^k \wedge \eta
\end{equation*}
where $\eta := \sum_{J \in \calJ_{2k}^n} \sigma_J \, \eta_J \, e^{J^c}\in\Lambda^{n-2k}\R^n$, which concludes the proof of the proposition when $i=2k$ is even.

We now consider the case where $i=2k+1$ is odd. Let $f\in \affhn_{2k+1}$ and let $b_I:\Lambda^1\R^n \rightarrow \R$, $I\in \calI_{k}^{n}$, be linear forms given by Proposition~\ref{prop:step1-free-case}~(iii) so that
\begin{equation*} \label{e:case-2k+1}
f(\theta,\omega) = \sum_{I\in \calI_{k}^{n}} b_I(\theta) \, \omega_I~.
\end{equation*}
For $k=0$ we get that $f(\theta,\omega) = b_\emptyset (\theta)$ and the required conclusion clearly holds true. Thus assume that $k\in \{1,\dots,\lfloor (n-1)/2 \rfloor\} $ and let $\theta\in \Lambda^1\R^n$ be given. As in the proof of~\eqref{e:vanishing-b_I}, consider the function $g:\free_n \rightarrow \R$ given by $g(\tau,\omega):=f_{2k+1}(\theta,\omega)$ for $(\tau,\omega)\in \free_n$. We have $g\in \affhn$, see the proof of~\eqref{e:vanishing-b_I}, and since $g\circ \delta_t = t^{2k} g$, it follows that $g\in \affhn_{2k}$. Then we know from the previous cases and Corollary~\ref{cor:injectivity} that there is a unique $\overline{\eta}(\theta) \in \Lambda^{n-2k}\R^n$ such that $f(\theta,\omega)\nu = \omega^k \wedge \overline{\eta}(\theta)$ for all $\omega\in\Lambda^2\R^n$. Next, it follows from the linearity of the $b_I$ that the map $\theta\in\Lambda^1\R^n \mapsto  \omega^k \wedge \overline{\eta}(\theta)$ is linear for every $\omega\in\Lambda^2\R^n$. Since $\Span\{\omega^k: \omega \in \Lambda^2\R^n\} = \Lambda^{2k}\R^n$, see~\eqref{e:span-2k}, we get that $\overline{\eta}:\Lambda^1\R^n \rightarrow \Lambda^{n-2k}\R^n$ is linear. We now claim that $\theta \wedge \overline{\eta}(\theta) = 0$ for all $\theta \in \Lambda^1\R^n$. Indeed, on the one hand, we know that the function $t\in\R \mapsto f((\theta',\omega)\cdot(t \theta,0)) = f(\theta'+t \theta,\omega + t \theta'\wedge \theta)$ is affine for all $(\theta,\omega)\in\free_n$, $\theta'\in\Lambda^1\R^n$. On the other hand, we have
\begin{equation*}
\begin{split}
f(\theta'+t\theta,\omega + t\theta'\wedge \theta) \, \nu &= (\omega + t\theta'\wedge \theta)^k \wedge \overline{\eta}(\theta'+t\theta)\\
&=(\omega^k + kt \, \omega^{k-1}\wedge\theta'\wedge \theta) \wedge (\overline{\eta}(\theta') + t\,\overline{\eta}(\theta))\\
&= \omega^k \wedge \overline{\eta}(\theta') + t(\omega^k \wedge \overline{\eta}(\theta)+ k \, \omega^{k-1}\wedge\theta'\wedge \theta \wedge \overline{\eta}(\theta')) \\
&\phantom{=\,} + t^2 \,k\, \omega^{k-1}\wedge\theta'\wedge \theta \wedge \overline{\eta}(\theta)~,
\end{split}
\end{equation*}
and hence the coefficient of $t^2$ vanishes, i.e., $\omega^{k-1}\wedge\theta'\wedge \theta \wedge \overline{\eta}(\theta) = 0$ for every $\theta,\theta'\in\Lambda^1\R^n$, $\omega\in\Lambda^2\R^n$. Since $\Span\{\omega^{k-1}\wedge\theta': \theta'\in\Lambda^1\R^n, \omega \in \Lambda^2\R^n\} = \Lambda^{2k-1}\R^n$, see~\eqref{e:span-2k+1}, we get that $\theta\wedge\ol\eta(\theta)=0$ for all $\theta \in \Lambda^1\R^n$, as claimed. Then the required conclusion follows from Proposition~\ref{prop:eta-map}, and this concludes the proof of the proposition.
\end{proof}

%%%%%%%%%%%
\section{Horizontally affine functions on arbitrary step-2 Carnot algebras} \label{sect:h-affine-maps-arbitrary-case}

In this section we prove Theorem~\ref{thm:h-affine-maps-nonfree-case}, that will be deduced from Theorem~\ref{thm:h-affine-maps-free-case} writing a step-2 rank-$r$ Carnot algebra that is not isomorphic to $\free_r$ as a proper quotient of $\free_r$, and Theorem~\ref{thm:h-affine-maps-vs-surjective-Carnot-morphism}. The main argument for proving both theorems is given in Lemma~\ref{lem:factor-through-quotient} where we characterize those functions in $\affhn_j$ that factor through $\free_n / \fraki$ where $\fraki$ a graded ideal of $\free_n$. We refer to \eqref{e:anh} and \eqref{e:anhk} for the notions of annihilators.  For notational convenience, we shall again identify $\free_n$ with $\Lambda^1\R^n \times \Lambda^2\R^n$ throughout this section.

\begin{lemma} \label{lem:factor-through-quotient} Let $n\geq 2$ and $\fraki$  be a graded ideal of $\free_n$. For $j\in\{0,\dots,n\}$, $\eta \in \Lambda^{n-j} \R^n$, the map $\varphi_\eta$ given by~\eqref{e:def-phieta} factors through $\free_n / \fraki$ if and only if $\eta \in \anhk[n-j] \,\fraki$.
\end{lemma}

\begin{proof} Write the graded ideal $\fraki$ of $\free_n \cong \Lambda^1\R^n \times \Lambda^2\R^n$ as  $\fraki = \fraki_1 \times \fraki_2$ where $\fraki_1$, $\fraki_2$ are linear subspaces of $\Lambda^1 \R^n$, $\Lambda^2\R^n$ such that $\theta \wedge \theta' \in \fraki_2$ for all $\theta \in \Lambda^1\R^n$, $\theta'\in \fraki_1$. Recall that $\varphi: \free_n \rightarrow \Lambda^n \R^n$ factors through $\free_n / \fraki$ if and only if $\varphi(\theta+\tau, \omega + \zeta) = \varphi(\theta, \omega)$ for all $(\theta, \omega) \in \free_n $, $(\tau,\zeta) \in \fraki$. If $\eta \in \Lambda^n \R^n$ then $\varphi_\eta$ is  constant and therefore clearly factors through $\free_n / \fraki$. Since $\anhk[n] \,\fraki =\Lambda^n \R^n$, see~\eqref{e:anh-0}, this proves the lemma for $j=0$. If $\eta \in \Lambda^{n-1} \R^n$ then $\varphi_\eta(\theta,\omega) = \theta \wedge \eta$. Therefore $\varphi_\eta$ factors through $\free_n / \fraki$ if and only if $\tau\wedge \eta= 0$ for all $\tau \in \fraki_1$, i.e., $\eta \in \anhk[n-1] \,\fraki_1 = \anhk[n-1] \fraki$, where the last equality comes from~\eqref{e:anh-1}, which proves the lemma for $j=1$. Now let $j\in \{2,\dots,n\}$. For $\eta \in \anhk[n-j] \,\fraki$, it easily follows from~\eqref{e:def-phieta} that $\varphi_\eta$ factors through $\free_n / \fraki$. Conversely, let $\eta\in \Lambda^{n-j}\R^n$ and assume that $\varphi_\eta$ factors through $\free_n / \fraki$. Then, for all $(\theta,\omega)\in\free_n$, $\zeta \in \fraki_2$, $t\in \R$, we have $\varphi_\eta(\theta,\omega + t \zeta) = \varphi_\eta(\theta,\omega)$, i.e.,
\begin{equation*} 
\left\{
\begin{aligned}[2]
 &(\omega + t\zeta)^k \wedge \eta =\omega^k \wedge \eta & & \text{ if }  j=2k \text{ is even}
 \\
 &\theta \wedge (\omega + t\zeta)^k \wedge \eta  = \theta \wedge \omega^k \wedge \eta  & & \text{ if }  j=2k+1 \text{ is odd}.
 \end{aligned}\right.
\end{equation*}
Identifying the coefficient of degree 1 in $t$, we get that for all $\theta \in \Lambda^1\R^n$,  $\omega \in \Lambda^2\R^n$, $\zeta \in \fraki_2$,
\begin{equation*} 
\left\{
\begin{aligned}[2]
 &\omega^{k-1} \wedge \zeta \wedge \eta =0 & & \text{ if }  j=2k \text{ is even}
 \\
 &\theta \wedge \omega^{k-1} \wedge \zeta \wedge \eta =0 & & \text{ if }  j=2k+1 \text{ is odd}.
 \end{aligned}\right.
\end{equation*}
It then follows from~\eqref{e:span-2k} when $j$ is even,~\eqref{e:span-2k+1} when $j$ is odd, and Lemma~\ref{lem:annihilator} that $\zeta \wedge \eta \in \anhk[n-j+2] \Lambda^{j-2}\R^n = \{0\}$ for all $\zeta \in \fraki_2$, i.e., $\eta \in \anhk[n-j] \,\fraki_2 = \anhk[n-j] \,\fraki$, where the last equality comes from~\eqref{e:anh-i}, and this concludes the proof of the lemma.
\end{proof}

We first prove Theorem~\ref{thm:h-affine-maps-nonfree-case}.

\begin{proof}[Proof of Theorem~\ref{thm:h-affine-maps-nonfree-case}]
Let $\frakg$ be a step-2 rank-$r$ Carnot algebra that is not isomorphic to $\free_r$. Clearly, $\bigoplus_{i=0}^{r-1} \affh (\frakg)_i \subset \affh (\frakg)$. To prove the converse inclusion, let $\pi : \free_r \rightarrow \frakg$ be a surjective Carnot morphism and $\nu\in \Lambda^r \R^r \setminus \{0\}$ be fixed. Recall for further use that $\ker \pi$ is a non trivial graded ideal of $\free_r$. Let $f \in \affh (\frakg)$. Then $f\circ \pi \in \affhn$ by Lemma~\ref{lem:h-affine-maps-vs-Carnot-morphisms} and it follows from Theorem~\ref{thm:h-affine-maps-free-case} that there are $\eta_i \in \Lambda^{r-i} \R^r$, $i\in\{0,\dots,r\}$, such that $(f\circ \pi) \nu = \sum_{i=0}^r \varphi_{\eta_i}$. We claim that each $\varphi_{\eta_i}$ factors through $\free_r / \Ker \pi$. Indeed, since $\pi$ commutes with dilations, we have for all $(\theta,\omega) \in \free_r$, $(\tau,\zeta) \in \Ker\pi$, $t \in \R^*$,
\begin{equation*}
\begin{split}
\sum_{i=0}^r t^i \varphi_{\eta_i}(\theta + \tau, \omega + \zeta) &= \sum_{i=0}^r (\varphi_{\eta_i}\circ \delta_t)(\theta + \tau, \omega + \zeta)\\
& = (f \circ \pi \circ \delta_t) (\theta + \tau, \omega + \zeta) \, \nu  = (f \circ \delta_t \circ \pi) (\theta + \tau, \omega + \zeta) \, \nu\\
& = (f \circ \delta_t \circ \pi) (\theta, \omega ) \, \nu  = (f \circ \pi \circ \delta_t)(\theta, \omega ) \, \nu \\
& =\sum_{i=0}^r (\varphi_{\eta_i}\circ \delta_t)(\theta, \omega ) = \sum_{i=0}^r t^i \varphi_{\eta_i}(\theta, \omega)~.
\end{split}
\end{equation*}
This implies that for all $i\in\{0,\dots,r\}$, $\varphi_{\eta_i}(\theta + \tau, \omega + \zeta) = \varphi_{\eta_i}(\theta, \omega)$ for all $(\theta,\omega) \in \free_r$, $(\tau,\zeta) \in \Ker\pi$, i.e.,  $\varphi_{\eta_i}$ factors through $\free_r / \Ker \pi$, as claimed. Since $\pi$ is surjective, it follows that for each $i\in\{0,\dots,r\}$ there is $f_i : \frakg \rightarrow \R$ such that $(f_i \circ \pi) \nu = \varphi_{\eta_i}$. Since $\varphi_{\eta_i} \in \affh(\free_n ,\Lambda^n\R^n)_i$, we get from Corollary~\ref{cor:h-affine-maps-i-vs-Carnot-morphisms} that $f_i \in \affh (\frakg)_i$ (note indeed that the analogue of Corollary~\ref{cor:h-affine-maps-i-vs-Carnot-morphisms} holds true for $\Lambda^n\R^n$-valued functions). Let us now verify that $f_r = 0$. Since $\varphi_{\eta_r}$ factors through $\free_r / \Ker \pi$, we know from Lemma~\ref{lem:factor-through-quotient} that $\eta_r \in \anhk[0] \ker \pi$. Since $\frakg$ is not isomorphic to $\free_r$, we have $\ker \pi \not=\{0\}$ and hence $\anhk[0] \ker \pi =\{0\}$. Therefore $\eta_r =0$ and hence $f_r=0$. All together we get that $f= \sum_{i=0}^{r-1} f_i \in \bigoplus _{i=0}^{r-1} \affh (\frakg)_i$ and this concludes the proof of Theorem~\ref{thm:h-affine-maps-nonfree-case}.
\end{proof}

We now prove Theorem~\ref{thm:h-affine-maps-vs-surjective-Carnot-morphism}

\begin{proof}[Proof of Theorem~\ref{thm:h-affine-maps-vs-surjective-Carnot-morphism}]
Let $\frakg$ be a step-2 rank-$r$ Carnot algegra, $n\geq r$, and $\pi:\free_n \rightarrow \frakg$ be a surjective Carnot morphism. Let $i\in\{0,\cdots,n\}$, $\nu \in \Lambda^n \R^n \setminus \{0\}$, $\eta\in \anhk[n-i] \ker \pi$. Since $\ker\pi$ is a graded ideal of $\free_n$, we know from Lemma~\ref{lem:factor-through-quotient} that $\varphi_\eta$ factors through $\free_n / \ker \pi$ and since $\pi$ is surjective we get the existence of a unique function $f:\frakg \rightarrow \R$ such that $(f \circ \pi)\nu = \varphi_\eta$. Furthermore, since $\varphi_\eta \in \affh(\free_n ,\Lambda^n\R^n)_i$, we get from Corollary~\ref{cor:h-affine-maps-i-vs-Carnot-morphisms} that $f\in  \affh (\frakg)_i$, which concludes the proof of Theorem~\ref{thm:h-affine-maps-vs-surjective-Carnot-morphism}~(i). Conversely, let $f\in  \affh (\frakg)_i$. Then $f\circ \pi \in \affhni$ by Corollary~\ref{cor:h-affine-maps-i-vs-Carnot-morphisms} and it follows from Theorem~\ref{thm:h-affine-maps-free-case}~(ii) that there is a unique $\eta \in \Lambda^{n-i} \R^n$ such that $(f\circ \pi)\nu = \varphi_\eta$. This equality shows in turn that $\varphi_\eta$ factors through $\free_n / \ker \pi$ and hence $\eta\in \anhk[n-i] \ker \pi$ by Lemma~\ref{lem:factor-through-quotient}, which concludes the proof Theorem~\ref{thm:h-affine-maps-vs-surjective-Carnot-morphism}~(ii). By linearity of the map $\eta \mapsto \varphi_\eta$, we get that the bijective map $\eta \in \anhk[n-i] \ker \pi \mapsto f\in \affh (\frakg)_i$ where $f$ is given by Theorem~\ref{thm:h-affine-maps-vs-surjective-Carnot-morphism}~(i) is linear and therefore is an isomorphism of vector spaces, which concludes the proof of Theorem~\ref{thm:h-affine-maps-vs-surjective-Carnot-morphism}~(iii). By Theorem~\ref{thm:h-affine-maps-free-case}~(i) and Theorem~\ref{thm:h-affine-maps-nonfree-case}, it follows that $\affh (\frakg)$ and $\bigoplus_{i=0}^{n}\anhk[n-i] \ker \pi$ are isomorphic as vector spaces. Finally, since $\ker\pi$ is a graded ideal of $\free_n$, we get from Corollary~\ref{cor:anh-homogeneous-ideal} that $\anh \ker\pi = \bigoplus_{i=0}^{n}\anhk[n-i] \ker \pi$. Therefore $\affh (\frakg)$ and $\anh \ker\pi$ are isomorphic as vector spaces, which concludes the proof of Theorem~\ref{thm:h-affine-maps-vs-surjective-Carnot-morphism}~(iv). 
\end{proof}

\begin{remark} \label{rmk:upper-bound-dimension}
If $\frakg$ is a step-2 rank-$r$ Carnot algebra that is not isomorphic to $\free_r$ then $\dim \affh (\frakg) \leq 2^r - r+1$. Indeed consider a surjective Carnot morphism $\pi : \free_r \rightarrow \frakg$. Then $\ker \pi$ is a non trivial graded  ideal of $\free_r$ that is contained in $\Lambda^2\R^r$. Therefore $\anhk[r]  \ker \pi = \Lambda^{r} \R^r$ by~\eqref{e:anh-0}, $\anhk[r-1]  \ker \pi = \anhk[r-1]  \{0\} = \Lambda^{r-1} \R^r$ where the first equality follows from~\eqref{e:anh-1}, and $\anhk[r-i]  \ker \pi \varsubsetneq \Lambda^{r-i} \R^r$ for $i\in\{2,\dots,r-1\}$. This latter claim indeed follows from the inclusion $\ker\pi \subset \Lambda^2\R^r$ together with the fact that $\anhk[2] \Lambda^{r-i} \R^r = \{0\}$ for $i\in\{2,\dots,r-1\}$, see Lemma~\ref{lem:annihilator}. By Theorem~\ref{thm:h-affine-maps-vs-surjective-Carnot-morphism}~(iii) we get that $\affh (\frakg)_i$ and $\Lambda^{r-i} \R^r$ are isomorphic for $i\in\{0,1\}$ and  $\dim \affh (\frakg)_i  \leq \dim \Lambda^{r-i} \R^r -1$ for $i\in \{2,\dots,r-1\}$. Therefore $\dim \affh (\frakg) \leq 2^r - r+1$ by Theorem~\ref{thm:h-affine-maps-nonfree-case}.
\end{remark}

\begin{remark} \label{rmk:affhj>=i}
It follows from Theorems~\ref{thm:h-affine-maps-free-case},~\ref{thm:h-affine-maps-nonfree-case},~\ref{thm:h-affine-maps-vs-surjective-Carnot-morphism}~(iii), and Lemma~\ref{lem:anhk=0-anhj=0-j<=k} that if $\frakg$ is a step-2  Carnot algebra then  $\affh (\frakg)_i =\{0\}$ for some non negative integer $i$ if and only if $\bigoplus_{j\geq i} \affh (\frakg)_j =\{0\}$.
\end{remark}

\begin{remark} \label{rmk:trivial-anh-kernel} Note that it follows from Theorems~\ref{thm:h-affine-maps-free-case} and~\ref{thm:h-affine-maps-vs-surjective-Carnot-morphism}~(iii) that if $n>r\geq 2$ and $\pi:\free_n\rightarrow\free_r$ is a surjective Carnot morphism then $\anhk[n-i] \Ker\pi =\{0\}$ for $i\in \{r+1,\dots,n\}$. Similarly, it follows from Theorems~\ref{thm:h-affine-maps-nonfree-case} and~\ref{thm:h-affine-maps-vs-surjective-Carnot-morphism}~(iii) that if $\frakg$ is a step-2 rank-$r$ Carnot algebra that is not isomorphic to $\free_r$, $n\geq r$, and $\pi : \free_n \rightarrow \frakg$ is a surjective Carnot morphism then $\anhk[n-i] \Ker\pi =\{0\}$ for $i\in \{r,\dots,n\}$. 
\end{remark}

%%%%%%%%%%%
\section{Step-2 Carnot algebras where horizontally affine functions are affine} \label{sect:h-affine-maps-are-affine}

\subsection{Characterization} \label{subsect:h-affine-maps-are-affine}
This section is devoted to the proof of Theorem~\ref{thm:h-affine-maps-are-affine} that characterizes step-2 Carnot algebras where h-affine functions are affine. We begin with an easy consequence of Theorem~\ref{thm:h-affine-maps-vs-surjective-Carnot-morphism}.

\begin{lemma} \label{lem:affhi<=2-contained-in-aff}
Let $\frakg$ be a step-2 Carnot algebra. Then $\bigoplus_{i=0}^2 \affh (\frakg)_i\subset  \aff  (\frakg)$.
\end{lemma}

\begin{proof}
Let $f \in \bigoplus_{i=0}^2 \affh (\frakg)_i$. Let $r:=\rank\frakg$, $\pi : \free_r \rightarrow \frakg$ be a surjective Carnot morphism, and $\nu \in \Lambda^r\R^r\setminus\{0\}$. By Theorem~\ref{thm:h-affine-maps-vs-surjective-Carnot-morphism}~(ii) there are $\eta_i \in \anhk[n-i] \ker \pi$, $i\in\{0,1,2\}$, such that $(f\circ \pi) \nu = \sum_{i=0}^2 \varphi_{\eta_i}$. Then it clearly follows from the form of $\varphi_{\eta_i}$, see~\eqref{e:def-phieta}, together with the fact that $\pi$ is a surjective Carnot morphism that $f \in \aff (\frakg)$.
\end{proof}

We now turn to the proof of Theorem~\ref{thm:h-affine-maps-are-affine}.

\begin{proof}[Proof of Theorem \ref{thm:h-affine-maps-are-affine}] The equivalence between Theorem \ref{thm:h-affine-maps-are-affine}~(i) and (ii) follows from Lemmas \ref{lem:aff-contained-in-affhi<=2} and \ref{lem:affhi<=2-contained-in-aff}. Next, Theorem~\ref{thm:h-affine-maps-are-affine}~(ii) clearly implies Theorem~\ref{thm:h-affine-maps-are-affine}~(iii) since $\affh (\frakg)_i$ are linear subspaces of $\affh (\frakg)$ that are in direct sum, recalling also that $\bigoplus_{i\geq 3} \affh (\frakg)_i =\{0\}$ if and only if $\affh (\frakg)_3 =\{0\}$, see Remark~\ref{rmk:affhj>=i}.

Now, assume that $\affh(\frakg)_3 = \{0\}$. Let $b:\frakg_1 \times \frakg_2\rightarrow \R$ be a bilinear form such that $b(x,[x,x'])=0$ for all $x,x'\in\frakg_1$. Identifying $\frakg$ with $\frakg_1 \times \frakg_2$, we have for $x,x'\in \frakg_1$, $z\in \frakg_2$, $t\in \R$,
\begin{equation*}
\begin{split}
b((x,z)\cdot(tx',0))&=b(x+tx', z+t[x,x'])\\
& = b(x, z)+tb(x', z) + tb(x,[x,x']) + tb(x',[x,x'])\\
& = b(x, z)+tb(x', z)
\end{split}
\end{equation*}
and $b(\delta_t(x,z)) = b(tx,t^2z) = t^3 b(x,z)$. Therefore $b\in \affh(\frakg)_3$ and hence $b=0$, which proves that $\frakg$ is $\calI$-null. 

Next, assume that $\frakg$ is $\calI$-null. Let $n\geq \max\{3,r\}$ and $\pi : \free_n \rightarrow \frakg$ be a surjective Carnot morphism and let us verify that $\anhk[n-3] \ker \pi = \{0\}$. Let $\nu \in \Lambda^n \R^n \setminus \{0\}$ be given.  By Theorem~\ref{thm:h-affine-maps-vs-surjective-Carnot-morphism}~(i), for $\eta \in \anhk[n-3] \ker \pi$ there is $f\in \affh(\frakg)_3$ such that $(f\circ \pi) \nu = \varphi_\eta$. Identifying $\free_n$ with $\Lambda^1\R^n \times \Lambda^2\R^n$, we have $\varphi_\eta(\theta,\omega) = \theta \wedge \omega \wedge \eta$. Therefore $\varphi_\eta: \Lambda^1\R^n \times \Lambda^2 \R^n \rightarrow \Lambda^n\R^n$ is bilinear. Since $\pi$ is a surjective Carnot morphism, it follows that $f:\frakg \cong \frakg_1 \times \frakg_2 \rightarrow \R$ is bilinear as well. Furthermore, for $x,x' \in \frakg_1$, let $\theta ,\theta' \in \Lambda^1 \R^n$ be such that $x=\pi (\theta)$, $x'=\pi(\theta')$. Then $[x,x'] = \pi(\theta\wedge \theta')$ and hence, identifying $\frakg$ with $\frakg_1\times \frakg_2$, we have $(x,[x,x']) =\pi(\theta,\theta\wedge \theta')$. Therefore $f(x,[x,x'])\nu =\varphi_\eta(\theta, \theta \wedge \theta')  =0$. Since $\frakg$ is $\calI$-null, it follows that $f=0$ and hence $\eta=0$ by Corollary~\ref{cor:injectivity}. Therefore $\anhk[n-3] \ker \pi = \{0\}$, as wanted. Recall that this is in turn equivalent to $\bigoplus_{i=3}^n \anhk[n-i] \ker \pi = \{0\} $ by Lemma~\ref{lem:anhk=0-anhj=0-j<=k}. 

To conclude the proof of Theorem~\ref{thm:h-affine-maps-are-affine}, assume that there are $n\geq \max\{3,r\}$ and a surjective Carnot morphism $\pi:\free_n \rightarrow \frakg$ such that $\bigoplus_{i=3}^n \anhk[n-i] \ker \pi = \{0\}$. By Theorem~\ref{thm:h-affine-maps-vs-surjective-Carnot-morphism}~(iii) we get that $\bigoplus_{i=3}^n \affh (\frakg)_i =\{0\}$ and Theorems~\ref{thm:h-affine-maps-free-case}~(i) and~\ref{thm:h-affine-maps-nonfree-case} imply in turn Theorem~\ref{thm:h-affine-maps-are-affine}~(ii).
\end{proof}

\subsection{Examples}\label{exx} 

In this section we deduce from Theorem~\ref{thm:h-affine-maps-are-affine} sufficient conditions implying that h-affine functions are affine and necessary conditions that must be satisfied when this is the case. These conditions may be easier to verify on concrete examples than those given in the characterization obtained in Theorem~\ref{thm:h-affine-maps-are-affine}. We however illustrate with explicit examples to what extent some of these easier conditions cannot be turned into characterizations of step-2 Carnot algebras where h-affine functions are affine. We shall also see from some of these examples that, unlike affine functions, a h-affine function defined on a Lie subalgebra of a step-2 Carnot algebra may not admit a h-affine extension to the whole algebra.

\begin{proposition} \label{prop:I-null-if-adx-surjective}   
Let $\frakg$ be a step-2 Carnot algebra and assume there is $x\in \frakg_1$ such that $\ad_x:y\in \frakg_1 \mapsto [x,y] \in \frakg_2$ is surjective. Then $\affh(\frakg) = \aff (\frakg)$.
\end{proposition}

\begin{proof}
If $\ad_x$ is surjective for some $x \in \frakg_1$ then so is $\ad_{x'}$ for $x' \in U$ for some open neighborhood $U\subset \frakg_1$ of $x$. If $b:\frakg_1 \times \frakg_2 \rightarrow \R$ is a bilinear form such that $b(u,[u,v]) = 0$ for all $u,v \in \frakg_1$ then, by bilinearity of $b$ and surjectivity of $\ad_{x'}$, we get $b(x',z)=0$ for all $x'\in U$, $z\in \frakg_2$, and finally $b=0$, using once again the bilinearity of $b$ together with the fact that $U$ is a non empty open subset of $\frakg_1$. Therefore $\frakg$ is $\calI$-null and hence $\affh(\frakg) = \aff (\frakg)$ by Theorem~\ref{thm:h-affine-maps-are-affine}~(i)-(iv).
\end{proof}

Proposition~\ref{prop:I-null-if-adx-surjective} applies in particular to step-2 Carnot algebras of M\'etivier's type, i.e., step-2 Carnot algebras where $\ad_x$ is surjective for all $x\in \frakg_1 \setminus \{0\}$. 

The condition given in Proposition~\ref{prop:I-null-if-adx-surjective} about the surjectivity of $\ad_x$ for some $x\in \frakg_1$ implying that $\frakg$ is $\calI$-null should not be confused with the surjectivity of the Lie bracket $[\cdot,\cdot]: \frakg_1 \times \frakg_1 \rightarrow \frakg_2$. Indeed, step-2 Carnot algebras of M\'etivier's type are examples of $\calI$-null Lie algebras where the Lie bracket is surjective, whereas Example~\ref{ex:1} below gives an example of a $\calI$-null step-2 Carnot algebra  where the Lie bracket is not surjective. On the other hand, free step-2 Carnot algebras of rank 3 or higher are not $\calI$-null whereas the Lie bracket $(\theta, \theta') \in \Lambda^1 \R^n \times \Lambda^1\R^n \mapsto \theta \wedge \theta' \in \Lambda^2 \R^n$ is surjective if and only if $n=3$.

\begin{example} \label{ex:1} Let $\fraki$ be the graded ideal of $\free_4$ given by $\fraki:=\Span\{e^{12}+e^{34}\}$ and let $\frakg:= \free_4 / \fraki$. Elementary computations show that $\anh^1 \fraki = \{0\}$. Therefore $\affh(\frakg) = \aff (\frakg)$ and $\frakg$ is $\calI$-null by Theorem~\ref{thm:h-affine-maps-are-affine}~(i)-(iv)-(v). To see that the Lie bracket is not surjective we identify $\frakg$ with $\Lambda^1\R^4 \oplus \frakg_2$ where  $\frakg_2 :=\{\omega\in \Lambda^2 \R^4:\, \omega_{34} = 0\}$. The only non trivial bracket relations are given by 
\begin{equation*}
[\theta,\theta'] = ((\theta_1 \theta_2' - \theta_1' \theta_2) - (\theta_3 \theta_4' - \theta_3' \theta_4)) \, e^{12} + \sum_{13\leq ij \leq 24} (\theta_i \theta_j' - \theta_i' \theta_j) \,e^{ij}
\end{equation*}
for $\theta, \theta' \in \Lambda^1\R^4$ and we claim that 
\begin{equation*} \label{e:not-onto}
\operatorname{Im} [\cdot,\cdot] \cap \{ \omega \in \frakg_2: \omega_{14} = \omega_{23} = 0\} \subset \{\omega \in\frakg_2 : \omega_{12}^2 + 4\, \omega_{13} \, \omega_{24} \geq 0\}.
\end{equation*}
Indeed, let $\omega  = [\theta,\theta'] $ with $\theta, \theta' \in \Lambda^1\R^4$. Set $u_i:=(\theta_i,\theta_i') \in \R^2$ for $i=1,2,3,4$. We have $\omega_{ij} = \det (u_i,u_j)$ for $13\leq ij \leq 24$. Assume that $\omega_{14} = \omega_{23} = 0$, or equivalenlty, that $u_1$ and $u_4$, respectively $u_2$ and $u_3$, are colinear. Since $\omega_{12} = \det(u_1,u_2) - \det(u_3,u_4)$, we get that there are $s, t \in \R$ and $i_1\in \{1,4\}$, $i_2\in \{2,3\}$ such that $\omega_{12}^2 + 4\, \omega_{13} \, \omega_{24} = ((s+t)^2 -4st) \det(u_{i_1},u_{i_2})^2 = (s-t)^2 \det(u_{i_1},u_{i_2})^2 \geq 0$, as claimed.
\end{example}

Given a step-2 Carnot algebra $\frakg$, we say that $\frakg'$ is a quotient of $\frakg$ if $\frakg'$ is a step-2 Carnot algebra and there is a surjective Carnot morphism $\pi : \frakg \rightarrow \frakg'$. 

\begin{proposition} \label{prop:quotient-Inull-Lie-algebra}
Let $\frakg$ be a step-2 Carnot algebra such that $\affh(\frakg) = \aff (\frakg)$. Then $\affh(\frakg') = \aff(\frakg')$ for every quotient $\frakg'$ of $\frakg$.
\end{proposition} 

\begin{proof}
By~\cite[Lemma~2.3]{MR3086803} every quotient of a $\calI$-null Lie algebra is $\calI$-null and hence the proposition follows from Theorem~\ref{thm:h-affine-maps-are-affine}~(i)-(iv).
\end{proof}

Note that it may happen that $\aff(\frakg) \varsubsetneq \affh(\frakg)$ while $\affh(\frakg') = \aff(\frakg')$ for every proper quotient $\frakg'$ of $\frakg$, i.e., for every quotient of $\frakg$ that is not isomorphic to $\frakg$. A simple example is given by the free step-2 rank-3 Carnot algebra $\free_3$. Indeed, we know from Theorem~\ref{thm:h-affine-maps-free-case} that $\aff(\free_3) \varsubsetneq \affh(\free_3)$, whereas every proper quotient $\frakg'$ of $\free_3$ is either isomorphic to $\free_2$ or has rank 3 and is not isomorphic to $\free_3$, therefore $\affh(\frakg') = \aff(\frakg')$ by Theorems \ref{thm:heisenberg}, \ref{thm:h-affine-maps-nonfree-case}, and \ref{thm:h-affine-maps-are-affine}~(i)-(ii). See also~Example~\ref{ex:2} for another example that is not isomorphic to $\free_3$.

Note also that since $\aff(\free_3) \varsubsetneq \affh(\free_3)$, Proposition~\ref{prop:quotient-Inull-Lie-algebra} has the following immediate corollary.

\begin{corollary} \label{cor:G-with-F3-as-quotient}
Let $\frakg$ be a step-2 Carnot algebra that has $\free_3$ as one of its quotients. Then $\aff(\frakg) \varsubsetneq \affh(\frakg)$.
\end{corollary}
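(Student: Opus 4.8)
The plan is to argue by contraposition, reducing everything to the already-established strict inclusion $\A(\Lambda^1(\R^3) \times \Lambda^2(\R^3)) \varsubsetneq \A_h(\F_3)$. Since \eqref{e:affine-maps-are-h-affine} always gives the inclusion $\A(V_1\times V_2) \subseteq \A_h(\G)$, proving that this inclusion is strict amounts to ruling out equality. So I would assume, for contradiction, that $\A_h(\G) = \A(V_1\times V_2)$, and use the hypothesis that $\G$ admits a surjective Carnot morphism $\pi : \G \to \F_3$ to force $\A_h(\F_3) = \A(\Lambda^1(\R^3) \times \Lambda^2(\R^3))$, thereby contradicting Theorem~\ref{thm:h-affine-maps-free-groups}.

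First I would record that the strict inclusion $\A(\Lambda^1(\R^3) \times \Lambda^2(\R^3)) \varsubsetneq \A_h(\F_3)$ is a consequence of Theorem~\ref{thm:h-affine-maps-free-groups}; concretely, for $\eta_0 \in \Lambda^0(\R^3)\setminus\{0\}$ the map $(\theta,\omega) \mapsto \theta\wedge\omega\wedge\eta_0$ is $h$-affine but not affine, see the discussion after~\eqref{quadratic}. Next, under the assumption $\A_h(\G) = \A(V_1\times V_2)$, I would transport this equality across $\pi$. If $\Ker\pi \neq \{(0,0)\}$, then $\F_3$ is a \emph{proper} quotient of $\G$, and Proposition~\ref{prop:A_h(G)-vs-quotient} applies directly to yield $\A_h(\F_3) = \A(\Lambda^1(\R^3) \times \Lambda^2(\R^3))$, the desired contradiction.

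The one step requiring care --- the main (minor) obstacle --- is that Proposition~\ref{prop:A_h(G)-vs-quotient} is stated only for \emph{proper} quotients, whereas the hypothesis merely provides some surjective Carnot morphism $\pi : \G \to \F_3$. I would therefore treat the degenerate case separately: if $\Ker\pi = \{(0,0)\}$, then $\pi$ is a bijective Carnot morphism, hence a Carnot isomorphism, so $\G$ is isomorphic to $\F_3$, and Corollary~\ref{cor:isomorphisms-vs-h-affine-maps} turns the assumed equality $\A_h(\G) = \A(V_1\times V_2)$ into $\A_h(\F_3) = \A(\Lambda^1(\R^3) \times \Lambda^2(\R^3))$, again contradicting Theorem~\ref{thm:h-affine-maps-free-groups}. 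Combining both cases shows the assumption is untenable, so $\A(V_1\times V_2) \varsubsetneq \A_h(\G)$. Alternatively, one could bypass the case split entirely by a direct construction: pick $g \in \A_h(\F_3) \setminus \A(\Lambda^1(\R^3) \times \Lambda^2(\R^3))$ and set $f := g\circ\pi$; by Lemma~\ref{lem:h-affine-maps-and-quotients} we have $f \in \A_h(\G)$, and since $\pi$ is surjective and linear as a map between the underlying vector spaces (Remark~\ref{rmk:Carnot-morphisms}), affineness of $f$ would force affineness of $g$ via Lemma~\ref{lem:euclidean-affine-maps}, a contradiction, so $f \in \A_h(\G)\setminus\A(V_1\times V_2)$.
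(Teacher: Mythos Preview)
Your proposal is correct and follows the same route as the paper: the corollary is derived by contraposition from Proposition~\ref{prop:A_h(G)-vs-quotient} together with the fact that $\A(\Lambda^1(\R^3)\times\Lambda^2(\R^3))\varsubsetneq\A_h(\F_3)$. You are more careful than the paper in explicitly handling the case $\Ker\pi=\{(0,0)\}$ (the paper glosses over the ``proper'' hypothesis, which its own proof of Proposition~\ref{prop:A_h(G)-vs-quotient} does not actually use); your alternative direct construction via $f:=g\circ\pi$ is essentially a repetition of that proof and is equally valid.
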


It may happen that $\aff(\frakg) \varsubsetneq \affh(\frakg)$ while $\frakg$ does not have $\free_3$ as one of its quotients, as shows Example~\ref{ex:2} where $\frakg$ is a step-2 rank-5 Carnot algebra such that $\aff(\frakg) \varsubsetneq \affh(\frakg)$ and $\affh(\frakg') = \aff(\frakg')$ for every proper quotient $\frakg'$ of $\frakg$.

\begin{example} \label{ex:2} 
Let $\fraki$ be the graded ideal of $\free_5$ given by $\fraki:=\anhk[2] \{\zeta\}$ where $\zeta:=e^{12} + e^{45}$ and let $\frakg:=\free_5 / \fraki$ so that $\frakg$ is in particular a step-2 rank-5 Carnot algebra and therefore is not isomorphic to $\free_3$. We have $\zeta \in \anhk[2] \fraki$ and hence $\aff (\frakg) \varsubsetneq \affh (\frakg)$ by Theorem~\ref{thm:h-affine-maps-are-affine}~(i)-(v). We now claim that $\affh(\frakg') = \aff(\frakg')$ for every proper quotient $\frakg'$ of $\frakg$. Indeed, let $\frakg'$ be a proper quotient of $\frakg$ and let $\pi:\frakg \rightarrow \frakg'$ be a surjective Carnot morphism.  Let $\overline{\pi}: \free_5 \rightarrow \frakg$ denote the quotient map so that $\pi \circ \overline{\pi}:\free_5 \rightarrow \frakg'$ is a surjective Carnot morphism and let us verify that $\anhk[2] \ker \pi \circ \overline{\pi} =\{0\}$. Let $\eta\in \anhk[2] \ker \pi \circ \overline{\pi}$. We have $\anhk[1] \{\zeta\} \oplus \anhk[2] \{\zeta\} = \{0\} \oplus \anhk[2] \{\zeta\} = \ker \overline{\pi} \subset \ker \pi \circ \overline{\pi} \subset \anhk[1] \{\eta\} \oplus \anhk[2] \{\eta\}$. Since $\anhk[2] \{\zeta\} =\Span\{e^{12} - e^{45}, e^{14}, e^{15}, e^{24},e^{25}\}$, we have $(e^{12} - e^{45}) \wedge \eta  =0$, implying $\eta_{34} = \eta_{35} = \eta_{45} - \eta_{12} = \eta_{13} = \eta_{23} = 0$, and $e^{ij}\wedge \eta  =0$ for $i=1,2$, $j=4,5$, implying $\eta_{ij} = 0$ for $i=1,2$, $j=4,5$. Therefore $\eta \in \Span\{\zeta\}$. If $\eta\not=0$ then $\anhk[1]\{\eta\} \oplus \anhk[2] \{\eta\} = \anhk[1] \{\zeta\} \oplus \anhk[2] \{\zeta\}$ which implies in turn $\ker \overline{\pi} = \ker \pi \circ \overline{\pi}$ and therefore $\ker \pi =\{0\}$. This contradicts the fact that $\frakg'$ is a proper quotient of $\frakg$ and hence $\eta=0$. Therefore $\anhk[2] \ker \pi \circ \overline{\pi} =\{0\}$ and it follows from Theorem~\ref{thm:h-affine-maps-are-affine}~(i)-(v) that $\affh(\frakg') = \aff(\frakg')$, as claimed. 
\end{example}

We recall that the direct product $\frakg \times \R^d$ of a step-2 Carnot algebra with an abelian Lie algebra and the direct product $\frakg \times \frakg'$ of step-2 Carnot algebras inherit naturally of a structure of step-2 Carnot algebra from those of $\frakg$ and $\frakg'$.

\begin{proposition} \label{prop:direct-product-Inull-Lie-algebras}
Let $\frakg, \frakg'$ be a step-2 Carnot algebras and $d\geq 1$ be an integer. Then the following holds true:
\begin{enumerate}
\item[(i)] $\affh(\frakg \times \frakg') = \aff(\frakg \times \frakg')$ if and only if $\affh(\frakg) = \aff(\frakg)$ and $\affh(\frakg') = \aff(\frakg')$
\smallskip
\item[(ii)] $\affh(\frakg \times \R^d) = \aff(\frakg \times \R^d)$ if and only if $\affh(\frakg) = \aff(\frakg)$.
\end{enumerate}
\end{proposition}

\begin{proof}
By~\cite[Lemma~2.3]{MR3086803} any finite direct product of $\calI$-null Lie algebras is $\calI$-null. Therefore it follows from Theorem~\ref{thm:h-affine-maps-are-affine}~(i)-(iv) that $\affh(\frakg \times \frakg') = \aff(\frakg \times \frakg')$ whenever $\affh(\frakg) = \aff(\frakg)$ and $\affh(\frakg') = \aff(\frakg')$. Similarly, since abelian Lie algebras are $\calI$-null (see Definition~\ref{def:I-null}), we have $\affh(\frakg \times \R^d) = \aff(\frakg \times \R^d)$ whenever  $\affh(\frakg) = \aff(\frakg)$. The converse implications in (i) and (ii) follow from Proposition~\ref{prop:quotient-Inull-Lie-algebra} noting that the projection maps from $\frakg \times \frakg'$ onto either $\frakg$ or $\frakg'$ and from  $\frakg \times \R^d$ onto $\frakg$ are surjective Carnot morphisms. 
\end{proof}

The next proposition is another simple consequence of Theorem~\ref{thm:h-affine-maps-are-affine} that gives a sufficient condition ensuring that h-affine maps are affine.

\begin{proposition} \label{prop:F3-as-a-subalgebra-when-not-Inull} Let $\frakg$ be a step-2 Carnot algebra such that $\aff(\frakg) \varsubsetneq \affh(\frakg)$. Then there is a Lie subalgebra of $\frakg$ isomorphic to $\free_3$.
\end{proposition}

\begin{proof} 
By Theorem~\ref{thm:h-affine-maps-are-affine}~(i)-(iv) we know that $\frakg$ is not $\calI$-null. Then let $b:\frakg_1\times \frakg_2 \rightarrow \R$ be a non zero bilinear form such that $b(x,[x,x']) = 0$ for all $x,x'\in \frakg_1$. Since $b\not=0$, there are $x\in \frakg_1$, $z\in \frakg_2$ such that $b(x,z)\not=0$. By bilinearity together with the fact that $[\frakg_1,\frakg_1] = \frakg_2$, it follows that there are $x_1,x_2,x_3\in \frakg_1$ such that $b(x_1,[x_2,x_3])=1$. We claim that $[x_1,x_2], [x_1,x_3], [x_2,x_3]$ are linearly independent and therefore the Lie subalgebra of $\frakg$ generated by $x_1,x_2,x_3$ is isomorphic to $\free_3$. Indeed, note that since $b(x,[x,x']) = 0$ for all $x,x'\in \frakg_1$, the trilinear form $(x, x', x'') \in \frakg_1 \times \frakg_1\times \frakg_1 \mapsto b(x,[x',x''])$ is alternating and therefore $b(x_3,[x_1,x_2])=- b(x_2,[x_1,x_3])=b(x_1,[x_2,x_3])=1$. Now let $s_1,s_2,s_3 \in \R$ be such that $s_3 [x_1,x_2] + s_2 [x_1,x_3] + s_1 [x_2,x_3] = 0$. For $i \in \{1,2,3\}$, we have $b(x_i, s_3 [x_1,x_2] + s_2 [x_1,x_3] + s_1 [x_2,x_3]) = s_i\, b(x_i,[x_k,x_l]) = 0$ where $k<l $ and $ \{k,l\}=  \{1,2,3\}\setminus\{i\}$. Since $b(x_i,[x_k,x_l])\not=0$ for such indices $i,k,l$, it follows that $s_1=s_2=s_3=0$, which concludes the proof of the lemma.
\end{proof}

We stress that $\calI$-null step-2 Carnot algebras may have Lie subalgebras isomorphic to $\free_3$, as shown in the following two examples.

\begin{example} \label{ex:3} [The quaternionic Heisenberg algebra.] Let $i,j,k$ denote the quaternion units satisfying $i^2=j^2=k^2=ijk=-1$ and denote by $\H:=\{q_1+iq_2+jq_3+kq_4:\, q_i \in \R\}$ the set of quaternions. Given $q= q_1+iq_2+jq_3+kq_4 \in \H$, denote by $\operatorname{Im}q:= iq_2+jq_3+kq_4$ its imaginary part and $\overline{q}:=q_1-iq_2-jq_3-kq_4$ its conjugate. Equip $\H\oplus\operatorname{Im}\H$ with the Lie bracket for which the only non trivial relations are given by $[q,q']:= \operatorname{Im}(\overline{q}\,q')$ for $q,q'\in \H$ which makes $\H\oplus\operatorname{Im}\H$ a step-2 Carnot algebra that is well known to be of Heisenberg type, and therefore of M\'etivier's type (see for instance~\cite{Kaplan80}). Therefore we have $\affh(\H\oplus\operatorname{Im}\H) = \aff(\H\oplus\operatorname{Im}\H)$ and $H\oplus\operatorname{Im}\H$ is $\calI$-null by Proposition~\ref{prop:I-null-if-adx-surjective}. We now claim that the Lie subalgebra of $\H\oplus\operatorname{Im}\H$ generated by any three linearly independent elements in $\H$ is isomorphic to $\free_3$. Indeed, for $q,q'\in \H$, we have $[q,q']=0$ if and only if $q$ and $q'$ are colinear. This indeed follows from the fact that for $q\in \H\setminus\{0\}$, the linear map $\ad_q: q' \in \H \mapsto [q,q'] \in \operatorname{Im}\H$ is surjective with $q\in \Ker \ad_q$, together with the fact that $\dim \H = 4$ and $\dim \operatorname{Im}\H = 3$. Then let $q_1, q_2, q_3\in \H$ be linearly independent and assume by contradiction that $\dim\Span\{[q_i,q_j]: i,j \in \{1,2,3\}\} \leq 2$. Exchanging the role of  $q_1, q_2, q_3$ if necessary, there are $s,t\in \R$ such that $[q_2,q_3] = s [q_1,q_2] + t [q_1,q_3]$. Then  $[q_2 - t q_1, sq_1 + q_3]=0$ which implies that $q_2 - t q_1$ and $sq_1 + q_3$ are colinear and contradicts the fact that $q_1, q_2, q_3$ are linearly independent. Therefore $\dim\Span\{[q_i,q_j]: i,j \in \{1,2,3\}\} = 3$ and the Lie subalgebra generated by $q_1,q_2,q_3$ is isomorphic to $\F_3$, as claimed.
\end{example}

\begin{example} \label{ex:4} Let $\frakg:= \free_4 / \fraki$ where $\fraki:=\Span\{e^{12}+e^{34}\}$ be the $\calI$-null step-2 Carnot algebra given by Example~\ref{ex:1}. We claim that the Lie subalgebra of $\frakg$ generated by any three linearly independent elements in $\Lambda^1\R^4$ is isomorphic to $\free_3$. Indeed let $\pi:\free_4 \rightarrow \frakg$ denote the quotient map. Let $\theta_1,\theta_2,\theta_3\in \Lambda^1\R^4$ be linearly independent and let $a,b,c\in \R$ be such that $a\,\pi(\theta_1 \wedge \theta_2) + b\, \pi(\theta_1 \wedge \theta_3) + c\, \pi(\theta_2 \wedge \theta_3) =0$, i.e., $a\,\theta_1 \wedge \theta_2 + b\, \theta_1 \wedge \theta_3 + c\, \theta_2 \wedge \theta_3 \in \Ker\pi$. We have $(a\,\theta_1 \wedge \theta_2 + b\, \theta_1 \wedge \theta_3 + c\, \theta_2 \wedge \theta_3)^2 = 0$ while $\Ker\pi=\Span\{e^{12}+e^{34}\}$ with $(e^{12}+e^{34})^2\not= 0$. It follows that $a\,\theta_1 \wedge \theta_2 + b\, \theta_1 \wedge \theta_3 + c\, \theta_2 \wedge \theta_3=0$ and hence $a=b=c=0$. This proves that $\dim \Span\{\pi(\theta_i \wedge \theta_j): \, i,j \in \{1,2,3\}\} =3$ and therefore the Lie subalgebra of $\frakg$ generated by $\theta_1,\theta_2,\theta_3$ is isomorphic to $\free_3$, as claimed.
\end{example}

Theorem~\ref{thm:h-affine-maps-are-affine}~(i)-(iv) together with Proposition~\ref{prop:F3-as-a-subalgebra-when-not-Inull} has the following immediate consequence.

\begin{proposition} Let $\frakg$ be a step-2 Carnot algebra with $\dim \frakg_2 \leq 2$. Then $\affh(\frakg) = \aff(\frakg)$.
\end{proposition}

If $\rank \frakg = \dim \frakg_2 = 3$ then $\frakg$ is isomorphic to $\free_3$ and therefore  $\aff(\frakg) \varsubsetneq \affh(\frakg)$. This fact generalizes to higher rank step-2 Carnot algebras $\frakg$ with $\dim \frakg_2 = 3$ in the following way.

\begin{proposition}
Let $\frakg$ be a step-2 Carnot algebra with $\dim \frakg_2 = 3$. Then $\affh(\frakg) = \aff(\frakg)$ if and only if $\frakg$ is not isomorphic to  $\free_3 \times \R^d$ for some non negative integer $d$. 
\end{proposition}

\begin{proof}
If $\frakg$ is isomorphic to the direct product $\free_3 \times \R^d$ for some non negative integer $d$ then $\frakg$ has $\free_3$ as one of its quotients and we know by Corollary~\ref{cor:G-with-F3-as-quotient} that $\aff(\frakg) \varsubsetneq \affh(\frakg)$. Conversely, assume that $\aff(\frakg) \varsubsetneq \affh(\frakg)$. First, if $\rank \G = 3$, since $\dim \frakg_2 = 3$, then $\frakg$ is isomorphic to $\free_3$. Next, assume that $r:=\rank \G \geq 4$. By Proposition~\ref{prop:F3-as-a-subalgebra-when-not-Inull}, there are $x_1,x_2,x_3\in \frakg_1$ generating a Lie subalgebra of $\frakg$ isomorphic to $\free_3$ and there is a bilinear form $b:\frakg_1\times \frakg_2 \rightarrow \R$ such that $b(x,[x,x']) = 0$ for all $x,x'\in \frakg_1$ and  $b(x_1,[x_2,x_3]) =1$. Set $V:=\Span\{x_1,x_2,x_3\}$. We claim that for every $x\in \frakg_1 \setminus V$ there is $x'\in V$ such that $x+x'$ lies in the center of $\frakg$, i.e., $[y,x+x'] = 0$ for all $y\in \frakg_1$. To prove this claim, let $x\in \frakg_1 \setminus V$ be given. Since $\dim \frakg_2= 3=\dim \Span \{[x_i, x_j]: 1\leq i<j\leq 3\}$, there are $x_i^j \in \R$, $i,j \in \{1,2,3\}$, such that $[x_j,x] = x_3^j \, [x_1,x_2] - x_2^j \, [x_1,x_3] + x_1^j \, [x_2,x_3]$ for $j \in \{1,2,3\}$. Set $x':= x_3^2\, x_1 + x_1^3\, x_2 + x_2^1 \, x_3$ and let us verify that $x+x' \in \operatorname{Center} (\frakg)$. We first verify that $[x_j,x+x']=0$ for $j \in \{1,2,3\}$. Since  $b(x,[x,x']) = 0$ for all $x,x'\in \frakg_1$, the bilinear form $(y,y') \in V\times V \mapsto b(y,[y',x])$ is skew-symmetric. For $y=\sum_{1\leq i \leq 3} \a_i\, x_i \in V$ and $y'=\sum_{1\leq j \leq 3} \b_j\, x_j\in V$, we have $b(y,[y',x]) = \sum_{1\leq i,j \leq 3} x_i^j\, \a_i\, \b_j$. By skew-symmetry, we get $x_i^j = -x_j^i$ for $i,j \in \{1,2,3\}$ and it follows that $[x_j,x+x']=0$ for $j \in \{1,2,3\}$. Now let $y\in \frakg_1$ and  write $[y,x+x'] = s_3 \,[x_1,x_2] - s_2 \, [x_1,x_3] + s_1 \, [x_2,x_3]$ with $s_i\in\R$. Since the trilinear form $b(\cdot,[\cdot,\cdot])$ is alternating, we have $b(x_i,[y,x+x'])=s_i$. On the other hand, $b(x_i,[y,x+x']) = - b(y,[x_i,x+x']) = 0$. Hence $s_i=0$ for $i\in\{1,2,3\}$ and therefore $[y,x+x'] = 0$, as wanted. It now clearly follows from this claim that one can complete $(x_1,x_2,x_3)$ into a basis $(x_1,\dots,x_r)$ of $\frakg_1$ in such a way that $x_4,\dots,x_r \in \operatorname{Center} (\frakg)$, i.e., $\frakg$ is isomorphic to $\free_3 \times \R^{r-3}$.
\end{proof}

To conclude this section, let us remark that a h-affine function defined on a Lie subalgebra of a step-2 Carnot algebra may not admit a h-affine extension to the whole algebra. Indeed, let $\frakg$ be a step-2 Carnot algebra such that $\affh(\frakg) = \aff(\frakg)$ and such that there is a Lie subalgebra $\frakf$ of $\frakg$ isomorphic to $\free_3$, see Examples~\ref{ex:3} and~\ref{ex:4}. Then $\aff(\frakf)  \varsubsetneq \affh(\frakf)$ and therefore there is $h\in \affh(\frakf) \setminus \aff(\frakf)$. Assume there is $\overline{h} \in \affh(\frakg)$ whose restriction to $\frakf$ is $h$. By assumption on $\frakg$, we have $\overline{h} \in \aff(\frakg)$. Since $\frakf$ is a linear subspace of $\frakg$, it follows that the restriction of $\overline{h}$ to $\frakf$ is affine, i.e., $h\in \aff(\frakf)$, which gives a contradiction. Recall that on the contrary an affine function defined on an affine subspace of a vector space can always be extended to an affine function on the whole space.

\section{Appendix about linear and exterior algebra} \label{sect:appendix-algebra}

We gather in this appendix some basic facts about linear and exterior algebra not pertaining to h-affine functions that have been used in the previous sections. 

We start with a characterization of affine maps between real vector spaces whose elementary proof is left to the reader.

\begin{proposition} \label{prop:affine-maps} Let $E$, $F$ be real vector spaces. A map $f:E\rightarrow F$ is affine if and only if for every $x,y\in E$, the map $t\in\R \mapsto f(x+ty) $ is affine.   
\end{proposition}

For the sake of completeness, we state below an elementary property of multiaffine functions that has been used in the proof of Proposition~\ref{prop:step1-free-case}. The proof can easily be done by induction on the dimension and is left to the reader.

\begin{proposition} \label{prop:homogeneous-polynomial}
Let $p\geq 1$ be an integer and $E$ be a $p$-dimensional real vector space. Let $f:E \rightarrow \R$ and assume that there is a basis $(e_1,\dots,e_p)$ of $E$ such that the map $t\in\R \mapsto f(v+te_j)$ is affine for every $v\in E$ and $j\in\{1,\dots,p\}$. Then $f$ is a linear combination of the $u_J$, where $J$ ranges over the subsets of $\{1,\dots,p\}$ and $u_J(v) := \prod_{j\in J} v_j$ for $v=\sum_{j=1}^p v_j  e_j$ with the convention $u_\emptyset(v):=1$.
\end{proposition}

The rest of this appendix is devoted to (basic) facts about exterior algebra that have been used throughout this paper. Although some of them look quite elementary, we were however unable to find references in the literature and thus provide proofs for the reader's convenience.

Given integers $n\geq 1$ and $k\geq 1$ we denote by $\Lambda^k \R^n$ the set of alternating $k$-multilinear forms over $\R^n$. For $k=0$ we set $\Lambda^0\R^n := \R$. We denote by $\Lambda^* \R^n := \bigoplus_{k\geq 0} \Lambda^k \R^n$ the exterior algebra equipped with exterior product $\wedge$. We recall that $\Lambda^k \R^n = \{0\}$ if $k>n$. For $\zeta \in \Lambda^* \R^n$ we set $\zeta^0 := 1$ and $\zeta^k := \underbrace{\zeta \wedge \cdots \wedge \zeta}_{k\, \text{times}}$ for $k\geq 1$.

\begin{lemma} \label{lem:span}
For $n\geq 1$,  we have in $\Lambda^*\R^n$
\begin{gather}
 \Span \{\omega^k: \, \omega\in\Lambda^2\R^n \}=\Lambda^{2k}\R^n~, \label{e:span-2k} \\
\Span \{\theta \wedge \omega^k: \,\theta\in\Lambda^1\R^n, \,\omega \in \Lambda^2\R^n \} = \Lambda^{2k+1}\R^n \label{e:span-2k+1} 
\end{gather}
for all $k\geq 0$.
\end{lemma}

\begin{proof}
We only need to consider the non trivial cases where $n\geq 2$ and $1\leq k\leq\lfloor n/2 \rfloor$. Then~\eqref{e:span-2k} and~\eqref{e:span-2k+1} follow from the identity 
\begin{equation}\label{e:span-2k-identity} 
 (\theta_{1}\wedge\theta_{2} + \cdots + \theta_{2k-1}\wedge \theta_{2k})^k =k!\, \theta_1\wedge\cdots\wedge\theta_{2k}
\end{equation}
for all $\theta_1,\dots,\theta_{2k} \in \Lambda^1\R^n$.
\end{proof}

Given $n\geq 1$  we set $\calJ_{0}^{n}:=\{\emptyset\}$, 
\begin{equation*}
\calJ_{k}^{n}:=\{(j_1,\dots,j_k)\in \N^k: 1\leq j_1 < \cdots < j_k\leq n\}
\end{equation*}
for $k\in \{1,\dots,n\}$, and $\calJ^n:=\cup_{0\leq k \leq n} \calJ_{k}^{n}$. We write $\spt \emptyset :=\emptyset$ and $\spt J := \{j_1,\dots,j_k\} \subset \N$ for $J=(j_1,\dots,j_k) \in \calJ_{k}^{n}$. Given $J,J' \in \calJ^n$ we denote by $J\setminus J'$ the unique element in $\calJ^{n}$ such that $\spt (J\setminus J') = \spt J \setminus \spt J'$ and we set  $J^c:= (1,\dots,n) \setminus J$. We fix a basis $(e_1,\dots,e_n)$ of $\R^n$ and denote by $(e^1,\dots,e^n)$ its dual basis. For $J=(j_1,\dots,j_k) \in \calJ_{k}^{n}$, we set $e^J:= e^{j_1} \wedge \cdots \wedge e^{j_k} \in \Lambda^k\R^n$ with the convention $e^{\emptyset} := 1$. 

The space of exterior annihilators of an element in $\Lambda^i\R^n$ of some given order $k$ has been introduced in~\cite{MR2847348}, see also~\cite[Section~2.2]{Csato_Dacorogna_Kneuss_12}. More generally, given $n\geq 1$, $k\geq 0$, and $A\subset \Lambda^* \R^n$, we define the annihilator of $A$  in $\Lambda^* \R^n$, respectively $\Lambda^k\R^n$, as
\begin{align}
\anh A  &:= \{\eta\in \Lambda^* \R^n:\, \eta \wedge \zeta = 0 \text{ for all } \zeta \in A\}~, \label{e:anh}\\
\anhk[k] A &:= \Lambda^k\R^n \cap \anh A~. \label{e:anhk}
\end{align}
We also set $\Lambda^{\geq k} \R^n := \bigoplus_{i\geq k} \Lambda^i \R^n$.

\begin{lemma} \label{lem:annihilator} For $n\geq 1$, $k\in \{0,\dots,n\}$, we have $\anh \Lambda^k \R^n = \Lambda^{\geq n-k+1} \R^n$.
\end{lemma}

\begin{proof}
For $k=0$ we clearly have $\anh \Lambda^0 \R^n = \{0\} = \Lambda^{\geq n+1} \R^n$. Let $n\geq 1$ and $k\in\{1,\dots,n\}$ be given. Clearly $\Lambda^{\geq n-k+1} \R^n \subset \anh \Lambda^k \R^n$. Since $\anh \Lambda^k \R^n$ is graded, i.e., $\anh \Lambda^k \R^n = \bigoplus_{i\geq 0} \anhk[i] \Lambda^k \R^n$ (see Lemma~\ref{lem:anh-subsets-of-Lambdak}), if equality fails then $\anhk[i] \Lambda^k \R^n \not=\{0\}$ for some $i \in \{0,\dots, n-k\}$. Let $\eta \in \anhk[i] \Lambda^k \R^n \setminus \{0\}$ and $J=(j_1,\dots,j_i) \in \calJ_{i}^{n}$ be such that $\eta(e_{j_1}, \dots, e_{j_i})\not=0$. Then $\eta \wedge e^{J^c} = \eta(e_{j_1}, \dots, e_{j_i})\, e^J \wedge e^{J^c} \not=0$. However, we can write $e^{J^c} = e^{J_1} \wedge e^{J_2}$ for some $J_1 \in  \calJ_{k}^{n}$, $J_2\in\calJ_{n-i-k}^{n}$, and since $\eta \in \anh \Lambda^k \R^n$, we get $\eta \wedge e^{J^c} = 0$, which gives a contradiction.
\end{proof}

Writing $\calF(A,B)$ to denote the set of maps $f:A \rightarrow B$ we deduce from Lemma~\ref{lem:span} and Lemma~\ref{lem:annihilator} the following corollary.

\begin{corollary} \label{cor:injectivity}
For $k\in\{0,\dots,\lfloor n/2 \rfloor\}$, the linear map $\Lambda^{n-2k}\R^n \rightarrow \calF(\Lambda^2\R^n, \Lambda^n \R^n)$ given by $\eta \mapsto (\omega \mapsto \omega^k \wedge \eta)$ is injective. For $k\in\{0,\dots,\lfloor (n-1)/2 \rfloor\}$, the linear map $\Lambda^{n-2k-1}\R^n  \rightarrow \calF(\Lambda^1\R^n \times \Lambda^2\R^n, \Lambda^n \R^n)$ given by $\eta \mapsto ((\theta,\omega) \mapsto\theta \wedge \omega^k \wedge \eta)$ is injective.
\end{corollary}

\begin{proof}
By linearity we only need to verify that these maps have trivial kernel. Let $k\in\{0,\dots,\lfloor n/2 \rfloor\}$ and $\eta\in \Lambda^{n-2k}\R^n$ be such that $\omega^k \wedge \eta = 0$ for all $\omega \in \Lambda^2\R^n$. On the one hand, by~\eqref{e:span-2k} we have $\eta \in \anhk[n-2k] \Lambda^{2k} \R^n$. On the other hand, by Lemma~\ref{lem:annihilator} we have $\anhk[n-2k] \Lambda^{2k} \R^n = \{0\}$. Therefore $\eta=0$. Similarly, for $k\in\{0,\dots,\lfloor (n-1)/2 \rfloor\}$ and $\eta \in \Lambda^{n-2k-1}\R^n$ such that $\theta \wedge \omega^k \wedge \eta = 0$ for all $(\theta,\omega) \in \Lambda^1\R^n \times \Lambda^2\R^n$, we have by~\eqref{e:span-2k+1} and Lemma~\ref{lem:annihilator} that  $\eta \in \anhk[n-2k-1] \Lambda^{2k+1} \R^n = \{0\}$.
\end{proof}

The next proposition played a key role at the end of the proof of Proposition~\ref{prop:step2-free-case}.

\begin{proposition} \label{prop:eta-map}
For $n\geq 1$ the following holds. Let $k \in \{1,\dots,n\}$ and $\overline{\eta}:\Lambda^1\R^n \rightarrow \Lambda^{k}\R^n$ be linear. Assume that $\theta \wedge \overline{\eta}(\theta) = 0$ for all $\theta \in \Lambda^1\R^n$. Then there is $\eta \in \Lambda^{k-1}\R^n$ such that $\overline{\eta}(\theta) = \theta\wedge \eta$ for all $\theta \in \Lambda^1\R^n$.
\end{proposition}

\begin{proof} When $k=n$ every map $\overline{\eta}:\Lambda^1\R^n \rightarrow \Lambda^{n}\R^n$ satisfies the assumption $\theta \wedge \overline{\eta}(\theta) = 0$ for all $\theta \in \Lambda^1\R^n$. If $\overline{\eta}:\Lambda^1\R^n \rightarrow \Lambda^{n}\R^n$ is in addition assumed to be linear and $\eta_j \in \R$ is such that $\overline{\eta}(e^j) = \eta_j \, e^{(1,\dots, n)}$, then $\overline{\eta}(\theta) = \theta\wedge \eta$ for all $\theta \in \Lambda^1\R^n$ where $\eta := \sum_{j=1}^n \sigma_j \, \eta_j  \, e^{(1, \dots, n)\setminus (j)}$ and $\sigma_j \in \{-1,1\}$ is such that $e^{(1,\dots ,n)} = \sigma_j \, e^j \wedge e^{(1,\dots, n)\setminus (j)}$.

Let us now argue by induction on $n$. First, if $n=1$, the conclusion follows from the previous remark. Next, let $n\geq 2$. By the previous remark we only need to consider the case where $k \in \{1,\dots,n-1\}$. By linearity of $\overline{\eta}$, we only need to prove that there is $\eta\in\Lambda^{k-1}\R^n$ such that $\ol\eta(e^j)=e^j\wedge\eta$ for $j \in \{1,\dots,n\}$. Set $V_j:=\{x\in \R^n: e^j(x)=0\}$ for $j \in \{1,\dots,n\}$.

For $j\in \{1,\dots,n-1\}$, since $e^j\wedge\ol\eta(e^j) =0$, we can write $\ol\eta(e^j)=e^j\wedge \eta_j$ for some $\eta_j\in\Lambda^{k-1}V_j$. Next, write $\eta_j=\tau_j\wedge e^n+ \zeta_j $ for some $\tau_j\in\Lambda^{k-2}V_n$ and $\zeta_j\in\Lambda^{k-1}V_n$ (when $k=1$, $\tau_j =0$). Define $\ol\tau:\Lambda^1V_n\to\Lambda^{k-1}V_n$ and $\ol\zeta: \Lambda^1 V_n \to\Lambda^{k}V_n$ to be linear and such that $\ol\tau(e^j)=e^j\wedge \tau_j$ and $\ol\zeta(e^j)=e^j\wedge\zeta_j$ for all $j\in \{1,\dots,n-1\}$. For  $\theta\in\Lambda^1 V_n$, we have $\ol\eta(\theta) = \ol\tau(\theta) \wedge e^n+\ol\zeta(\theta)$. Thus $\theta\wedge\ol\eta(\theta)= \theta\wedge\ol\tau(\theta)\wedge e^n+\theta\wedge\ol\zeta(\theta) =0$ for all $\theta\in\Lambda^1 V_n$ and this implies in turn that $\theta\wedge\ol\tau(\theta)=0$ and $\theta\wedge\ol\zeta(\theta)=0$ for all $\theta\in\Lambda^1V_n$. By induction, there are $\tau\in \Lambda^{k-2}V_n$ (again $\tau=0$ when $k=1$) and $\zeta\in \Lambda^{k-1}V_n$ such that $\ol\tau(\theta) = \theta \wedge \tau$ and $\ol\zeta(\theta) = \theta \wedge \zeta$ for all $\theta \in \Lambda^1V_n$. It follows that $\ol\eta(\theta)= \theta \wedge (\tau \wedge e^n + \zeta)$ for all $\theta \in \Lambda^1 V_n$. We set $\eta:= \tau \wedge e^n + \zeta \in \Lambda^{k-1} \R^n$. To conclude the proof of the proposition, it remains to verify that $\ol \eta (e^n) = e^n \wedge \eta$. Since $1\leq  k \leq n-1$, this is equivalent to showing that $e^j \wedge \ol\eta(e^n) = e^j \wedge e^n \wedge \eta$ for all $j \in \{1,\dots,n\}$, see Lemma~\ref{lem:annihilator}. By assumption we have $\theta\wedge\ol\eta(\theta)=0$ for all $\theta \in \Lambda^1\R^n$ and therefore $\theta\wedge\ol\eta(\theta') + \theta'\wedge\ol\eta(\theta)=0$ for all $\theta, \theta' \in \Lambda^1\R^n$. It follows that $e^j \wedge \ol \eta (e^n) = - e^n \wedge \ol \eta (e^j) = -e^n \wedge e^j  \wedge \eta =  e^j \wedge e^n \wedge \eta$  for $j\in \{1,\dots,n-1\}$. Since $e^n \wedge \ol \eta (e^n)=0 =  e^n \wedge e^n \wedge \eta$, we finally get $\ol \eta (e^n) = e^n \wedge \eta$, and this concludes the proof of the proposition.
\end{proof}

Recall that a graded ideal $\fraki$ of $\free_n$ can be seen as a linear subspace of $\Lambda^*\R^n$ of the form $\fraki = \fraki_1\oplus \fraki_2$ where $\fraki_1, \fraki_2$ are linear subspaces of respectively $\Lambda^1 \R^n, \Lambda^2\R^n$ such that $\theta \wedge \theta' \in \fraki_2$ for all $\theta \in \Lambda^1 \R^n$, $\theta'\in \fraki_1$. The structure of annihilators of such subsets of $\Lambda^*\R^n$, and in particular Corollary~\ref{cor:anh-homogeneous-ideal} and Lemma~\ref{lem:anh-graded-ideal}, played a major role in Section \ref{sect:h-affine-maps-arbitrary-case}. Before proving Corollary~\ref{cor:anh-homogeneous-ideal} and Lemma~\ref{lem:anh-graded-ideal}, we first state two elementary lemmas. The easy proof of Lemma~\ref{lem:anh-subsapces-in-direct-sum} is left to the reader.

\begin{lemma} \label{lem:anh-subsapces-in-direct-sum}
Let $V, W$ be linear subspaces of $\Lambda^*\R^n$ that are in direct sum. Then $\anh (V \oplus W) = \anh V \cap  \anh W$ and $\anhk[k] (V \oplus W) = \anhk[k] V \cap  \anhk[k] W$ for all $k\geq 0$.
\end{lemma}

\begin{lemma} \label{lem:anh-subsets-of-Lambdak}
Let $n\geq 1$, $j\geq 0$, $A\subset \Lambda^j\R^n$. Then $\anh A = \bigoplus_{k\geq 0} \anhk[k] A$.
\end{lemma}

\begin{proof}
Clearly, $\bigoplus_{k\geq 0} \anhk[k] A \subset \anh A$. Conversely, let $\eta = \sum_{k=0}^n \eta_k \in \anh A$ where $\eta_k \in \Lambda^k \R^n$. For $a\in A$, we have $\sum_{k=0}^n \eta_k \wedge a=0$ with $\eta_k \wedge a \in \Lambda^{k+j} \R^n$ therefore $\eta_k \wedge a =0$ for all $k\in \{0,\dots,n\}$. Since this holds true for all $a\in A$, we get that $\eta_k \in \anhk[k] A$ for all $k\in \{0,\dots,n\}$, and therefore $\eta \in \bigoplus_{k\geq 0} \anhk[k] A$.
\end{proof}

\begin{corollary} \label{cor:anh-homogeneous-ideal}
Let $\fraki_1, \fraki_2$ be linear subspaces of  respectively $\Lambda^1 \R^n, \Lambda^2\R^n$. Then $\anh (\fraki_1 \oplus \fraki_2) = \bigoplus_{k\geq 0} \anhk[k] (\fraki_1 \oplus \fraki_2)$.
\end{corollary}

\begin{proof} By Lemma~\ref{lem:anh-subsapces-in-direct-sum} and Lemma~\ref{lem:anh-subsets-of-Lambdak}, we have
\begin{equation*}
\begin{split}
\anh (\fraki_1 \oplus \fraki_2) &= \anh \fraki_1 \cap \anh \fraki_2 = (\oplus_{k\geq 0} \, \anhk[k] \fraki_1 ) \cap (\oplus_{k\geq 0} \, \anhk[k] \fraki_2)\\
& = \oplus_{k\geq 0} \, \anhk[k] \fraki_1 \cap \anhk[k] \fraki_2 = \oplus_{k\geq 0} \, \anhk[k] (\fraki_1 \oplus \fraki_2)~.
\end{split}
\end{equation*}
\end{proof}

\begin{lemma} \label{lem:anh-graded-ideal}
Let $n\geq 2$, $\fraki_1, \fraki_2$ be linear subspaces of respectively $\Lambda^1 \R^n, \Lambda^2\R^n$ such that $\theta \wedge \theta' \in \fraki_2$ for all $\theta \in \Lambda^1 \R^n$, $\theta'\in \fraki_1$. Then
\begin{align}
&\anhk[n] (\fraki_1 \oplus \fraki_2) = \Lambda^n \R^n, \label{e:anh-0}\\
&\anhk[n-1] (\fraki_1 \oplus \fraki_2) = \anhk[n-1] \fraki_1, \label{e:anh-1}\\
&\anhk[n-i] (\fraki_1 \oplus \fraki_2) = \anhk[n-i] \fraki_2 \, \text{ for } i \in \{2,\dots,n\}. \label{e:anh-i}
\end{align}
\end{lemma}

\begin{proof}
Clearly, $\anhk[n] A = \Lambda^n \R^n$ for all $A\subset \Lambda^{\geq 1}\R^n$ and~\eqref{e:anh-0} follows. By Lemma~\ref{lem:anh-subsapces-in-direct-sum}, we have $\anhk[n-1] (\fraki_1 \oplus \fraki_2) = \anhk[n-1] \fraki_1 \cap \anhk[n-1] \fraki_2$. For $A\subset \Lambda^{\geq 2}\R^n$, we have $\anhk[n-1] A = \Lambda^{n-1} \R^n$ therefore $\anhk[n-1] \fraki_2 = \Lambda^{n-1} \R^n$ and~\eqref{e:anh-1} follows. For $i \in \{2,\dots,n\}$, clearly $\anhk[n-i] (\fraki_1 \oplus \fraki_2)  \subset \anhk[n-i] \fraki_2$. Conversely, let $\eta \in \anhk[n-i] \fraki_2$ and $\theta \in \fraki_1$. By assumption, $\theta \wedge \theta' \in \fraki_2$ for all $\theta'\in \Lambda^1\R^n$. Therefore $\eta \wedge \theta \wedge \theta' = 0$ for all $\theta'\in \Lambda^1\R^n$, i.e., $\eta \wedge \theta \in \anhk[n-i+1] \Lambda^1\R^n = \Lambda^{n-i+1} \R^n \cap \Lambda^{\geq n} \R^n$, where the last equality follows from Lemma~\ref{lem:annihilator}. Since $i\in \{2,\dots,n\}$, we have $n-i+1 \leq n-1$, therefore $\Lambda^{n-i+1} \R^n \cap \Lambda^{\geq n} \R^n =\{0\}$ and hence $\eta \wedge \theta =0$. Since this holds true for all $\theta \in \fraki_1$, we get that $\eta \in \anhk[n-i] \fraki_1  \cap \anhk[n-i] \fraki_2  = \anhk[n-i] (\fraki_1 \oplus \fraki_2)$, where the last equality comes from Lemma~\ref{lem:anh-subsapces-in-direct-sum} and concludes the proof of~\eqref{e:anh-i}.
\end{proof}

We end this section with an observation that has been useful for our purposes in Remark~\ref{rmk:affhj>=i} and Section~\ref{sect:h-affine-maps-are-affine}.

\begin{lemma} \label{lem:anhk=0-anhj=0-j<=k}
Let $n\geq 1$, $k\in \{0,\dots,n\}$, $A \subset \Lambda^* \R^n$ be such that $\anhk[k] A = \{0\}$. Then $\anhk[j] A =\{0\}$ for all $j\in \{0,\dots,k\}$.
\end{lemma}

\begin{proof}
Let $j\in \{0,\dots,k\}$, $\eta \in \anhk[j] A$. For all $\zeta \in \Lambda^{k-j} \R^n$, $a\in A$, we have $\zeta \wedge \eta \wedge a = 0$, i.e., $\zeta \wedge \eta \in \anhk[k] A$. Since $\anhk[k] A = \{0\}$, it follows that $\eta \in \anhk[j] \Lambda^{k-j} \R^n = \Lambda^j \R^n \cap \Lambda^{\geq n-k+j+1} \R^n$, where the last equality follows from Lemma~\ref{lem:annihilator}. Since $n-k\geq 0$, we have $n-k+j+1 \geq j+1$, therefore $\Lambda^j \R^n \cap \Lambda^{\geq n-k+j+1} \R^n = \{0\}$ and hence $\anhk[j] A =\{0\}$.
\end{proof}

\bibliography{bibliography} 
\bibliographystyle{amsplain}

\end{document}